\newtheorem{thm}{Theor\`eme}[section]
\newtheorem{prop}[thm]{Proposition}
\newtheorem{lem}[thm]{Lemme}
\newtheorem{cor}[thm]{Corollaire}
\theoremstyle{definition}
\newtheorem{df}[thm]{D\'efinition}
\newtheorem{rmk}[thm]{Remarque}
\newcommand{\A}{\mathbb{A}}
\newcommand{\QQ}{\mathbb{Q}}
\newcommand{\FF}{\mathbb{F}}
\newcommand{\ZZ}{\mathbb{Z}}
\newcommand{\Z}{\mathbb{Z}}
\newcommand{\HH}{\mathbb{H}}
\newcommand{\WW}{\mathbb{W}}
\newcommand{\VV}{\mathbb{V}}
\newcommand{\UU}{\mathbb{U}}
\newcommand{\OO}{\mathcal{O}}
\newcommand{\s}{\infty}
\newcommand{\NN}{\mathbb{N}}
\renewcommand{\D}{\mathcal{D}}
\newcommand{\holim}{\mathrm{ho}\lim}
\DeclareMathOperator{\Spec}{Spec}
\DeclareMathOperator{\Map}{Map}
\DeclareMathOperator{\Hom}{Hom}
\DeclareMathOperator{\Bin}{Bin}
\DeclareMathOperator{\colim}{colim}
\newcommand{\fpqc}{\ensuremath{\mathrm{fpqc}}\xspace}
\DeclareMathOperator{\op}{op}
\DeclareMathOperator{\tf}{tf}
\DeclareMathOperator{\uni}{uni}
\title{Le problème de la schématisation de Grothendieck revisité}
\author{Bertrand To\"en}
\address{CNRS, Université de Toulouse, 
Institut de Mathématiques de Toulouse (UMR 5219), 
118, route de Narbonne, 
31062 Toulouse Cedex 9, 
France}
\email{Bertrand.Toen@math.univ-toulouse.fr}
\begin{document}



\maketitle

\begin{prelims}

\DisplayAbstractInFrench

\bigskip

\DisplayKeyWordsfr

\medskip

\DisplayMSCclassfr

\bigskip

\languagesection{English}

\bigskip

\DisplayTitleInEnglish

\medskip

\DisplayAbstractInEnglish

\end{prelims}


\newpage

\setcounter{tocdepth}{1} 

\tableofcontents


\section*{Introduction}

Dans \cite{poursuite} Grothendieck présente un programme 
pour appréhender de manière algébrique les types d'homotopie:
\emph{la schématisation de la théorie de l'homotopie}. Ce programme,
qui reste inachevé dans le manuscrit, consiste à définir une
notion de \emph{type d'homotopie schématique} au-dessus de
$\ZZ$, ainsi qu'un foncteur de \emph{schématisation}, 
qui à tout espace $X$ associe un type d'homotopie schématique $X\otimes \ZZ$,
et qui permette une description purement algébrique de l'homotopie de $X$. 
Plus généralement, il doit exister une notion relative 
à tout anneau commutatif de base $R$, de sorte à ce que l'on retrouve essentiellement
les théories d'homotopie rationnelles et $p$-adiques pour $R=\QQ$ et 
$R=\FF_p$. Lorsque $R=\ZZ$ la théorie des types d'homotopie schématiques
se doit d'être, d'après Grothendieck, aussi proche que possible de celle des types 
d'homotopie usuels.

Ce programme a été en partie réalisé par plusieurs auteurs, et 
nous renvoyons en particulier à \cite{ek,ma1,to} et aux résultats spectaculaires
de \cite{ma2}. Dans \cite{to} nous avons proposé de réaliser la schématisation
des types d'homotopie à l'aide de la notion de \emph{champs affines}, 
qui sont les champs déterminés par leurs algèbres cosimpliciales de cohomologie. 
Le foncteur de schématisation, au-dessus d'un anneau $R$, 
quand à lui est réalisé par le \emph{foncteur
d'affinisation} $X \mapsto (X\otimes R)^{\uni}$, qui à un espace $X$ associe
un champ affine universel construit sur $X$. Lorsque $R=\QQ$ nous avons 
montré que $(X\otimes \QQ)^{\uni}$ est un modèle au type d'homotopie rationnel
de $X$. De même, lorsque $R=\FF_p$, $(X\otimes \FF_p)^{\uni}$ est un 
modèle à la complétion $p$-adique de $X$. Cependant, la description explicite du 
champ $(X\otimes \Z)^{\uni}$, en particulier de ses groupes d'homotopie, 
était laissée sous forme conjecturale (voir \cite[conj.~2.3.6]{to}). \\

Dans ce travail nous donnons une preuve de la conjecture 
\cite[conj.~2.3.6]{to}, ce qui ouvre la voie à de nombreux résultats
sur le comportement du foncteur $X \mapsto (X\otimes \ZZ)^{\uni}$. Pour
cela, nous rappelons l'existence d'un schéma en groupes 
affine $\HH$ que nous appelons le \emph{groupe additif de Hilbert}
(voir définition \ref{d1}). Cette terminologie est justifiée par
le fait que l'algèbre des fonctions sur $\HH$ est l'algèbre
des polynômes à valeurs entières, dont une $\ZZ$-base est fournie
par les célèbres polynômes de Hilbert $\binom{X}{n}$. Ce schéma 
en groupes $\HH$ doit être pensé comme une version entière de la complétion de Malcev
du groupe discret $\ZZ$, et joue en rôle central dans l'étude de l'affinisation
des types d'homotopie. Notons que les points de $\HH$ à valeurs dans un corps
$k$ forment ou bien le groupe additif $(k,+)$ lorsque $\QQ\subset k$, 
ou bien le groupe des entiers $p$-adiques $\hat{\ZZ}_p$ lorsque $k$ est de caractéristique
$p>0$ (voir la proposition \ref{p1} et son corollaire \ref{cp1}). 
Les résultats principaux de ce travail peuvent alors se résumer de la façon suivante.

\begin{thm}\label{ti}
Soit $X$ un  espace simplement connexe de type fini. 

\begin{enumerate}

\item Les faisceaux d'homotopie de $(X\otimes \ZZ)^{\uni}$ sont 
donnés par
$$\pi_i((X\otimes \ZZ)^{\uni}) \simeq \pi_i(X)\otimes \HH.$$

\item Le morphisme d'adjonction
$X \longrightarrow (X\otimes \ZZ)^{\uni}(\ZZ)$
possède une rétraction fonctorielle en $X$. Cette rétraction 
préserve de plus les morphismes induits sur les groupes
d'homotopie. 

\item L'$\s$-foncteur $X \mapsto (X\otimes \ZZ)^{\uni}$ est fidèle
(i.e. injectif sur les groupes d'homotopie des espaces de morphismes).

\item L'$\s$-foncteur $X \mapsto (X\otimes \ZZ)^{\uni}$ est
injectif sur les classes d'équivalence d'objets.

\end{enumerate}
\end{thm}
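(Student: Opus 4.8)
L'assertion (1) constitue le cœur du résultat --- c'est précisément la conjecture \cite[conj.~2.3.6]{to} --- et les points (2), (3), (4) s'en déduiront de façon essentiellement formelle. Pour (1), le plan est de raisonner par récurrence le long de la tour de Postnikov de $X$. Comme $X$ est simplement connexe et de type fini, on a $X\simeq\lim_n X_{\leq n}$, chaque étage $X_{\leq n}\to X_{\leq n-1}$ étant une fibration principale de fibre $K(\pi_n(X),n)$, obtenue comme image réciproque de la fibration des chemins au-dessus de $K(\pi_n(X),n+1)$ le long d'un $k$-invariant. Je commencerais par le cas de base des espaces d'Eilenberg--MacLane $K(A,n)$, avec $A$ abélien de type fini et $n\geq 2$: il s'agit de vérifier que le seul faisceau d'homotopie non trivial de $(K(A,n)\otimes\ZZ)^{\uni}$ est $\pi_n\simeq A\otimes\HH$. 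On calcule pour cela l'algèbre cosimpliciale de cohomologie $C^*(K(A,n),\ZZ)$ et l'on identifie le champ affine associé; le corollaire \ref{cp1} et la proposition \ref{p1} (à savoir $\HH(k)\simeq(k,+)$ lorsque $\QQ\subset k$ et $\HH(k)\simeq\hat{\ZZ}_p$ en caractéristique $p$) assurent que ce champ redonne, après restriction, l'affinisation rationnelle $\pi_n\otimes\mathbb{G}_a$ (théorie de Sullivan) et l'affinisation $p$-adique (complétion $p$-adique), qui se recollent sur $\Spec(\ZZ)$ en $A\otimes\HH$.

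Pour l'étape de récurrence, il faut contrôler le comportement du foncteur $(-\otimes\ZZ)^{\uni}$ vis-à-vis des fibrations ci-dessus. C'est ici que réside la principale difficulté: bien que l'affinisation soit un adjoint à gauche, donc a priori non exacte, le point crucial est de montrer qu'elle préserve les carrés cartésiens de Postnikov, c'est-à-dire qu'elle envoie la fibration $K(\pi_n(X),n)\to X_{\leq n}\to X_{\leq n-1}$ sur une fibration de champs affines. Cela repose sur l'établissement d'une suite spectrale de type Eilenberg--Moore pour la cohomologie des champs affines et sur sa convergence au-dessus de $\Spec(\ZZ)$, laquelle exploite la simple connexité de la base ainsi que la platitude et la bonne nature cohomologique de $\HH$ (et de $A\otimes\HH$). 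Une fois cette exactitude acquise, la suite exacte longue des faisceaux d'homotopie de la fibration d'affinisations, combinée à l'hypothèse de récurrence et au cas des espaces d'Eilenberg--MacLane, fournit $\pi_i((X\otimes\ZZ)^{\uni})\simeq\pi_i(X)\otimes\HH$ pour tout $i$.

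Passons aux déductions. Pour (2), on remarque que la $\ZZ$-algèbre $\OO(\HH)$ des polynômes à valeurs entières contient la coordonnée $t=\binom{t}{1}$, ce qui fournit un morphisme d'évaluation $\HH(\ZZ)\to\ZZ$ rétractant l'inclusion canonique $\ZZ\to\HH(\ZZ)$, $m\mapsto\mathrm{ev}_m$. Grâce à (1) et à une suite spectrale de descente calculant les groupes d'homotopie de $(X\otimes\ZZ)^{\uni}(\ZZ)$ à partir de la cohomologie de $\Spec(\ZZ)$ à coefficients dans les $\pi_i(X)\otimes\HH$, cette rétraction s'étend, par récurrence le long de la tour de Postnikov et par la théorie des obstructions (applicable car $X$ est simplement connexe de type fini), en une rétraction $r_X$ de $\eta_X$, fonctorielle en $X$ et induisant sur les groupes d'homotopie le morphisme $\pi_i(X)\otimes\rho$ déduit de la rétraction $\rho\colon\HH(\ZZ)\to\ZZ$.

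Le point (3) est alors formel. Par naturalité de l'unité $\eta$, pour tout $f\colon X\to Y$ on a $(f\otimes\ZZ)^{\uni}(\ZZ)\circ\eta_X=\eta_Y\circ f$, d'où $r_Y\circ(f\otimes\ZZ)^{\uni}(\ZZ)\circ\eta_X=r_Y\circ\eta_Y\circ f=f$ sur les groupes d'homotopie, d'après (2). Ainsi l'application induite par l'affinisation sur les espaces de morphismes admet, au niveau des groupes d'homotopie, un inverse à gauche fonctoriel, et est donc injective: le foncteur est fidèle. Enfin, pour (4), partant d'une équivalence $\phi\colon(X\otimes\ZZ)^{\uni}\simeq(Y\otimes\ZZ)^{\uni}$, on pose $\psi:=r_Y\circ\phi(\ZZ)\circ\eta_X\colon X\to Y$. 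Le foncteur $-\otimes\HH$ étant conservatif sur les groupes abéliens de type fini (le rang se lit sur les points rationnels et la torsion $p$-primaire sur les points en caractéristique $p$), l'isomorphisme $\pi_i(X)\otimes\HH\simeq\pi_i(Y)\otimes\HH$ induit par $\phi$ force $\pi_i(X)\simeq\pi_i(Y)$; la naturalité de la rétraction $\rho$ permet alors de voir que $\psi$ réalise un isomorphisme sur tous les groupes d'homotopie, et le théorème de Whitehead, valable puisque $X$ et $Y$ sont simplement connexes, entraîne $X\simeq Y$.
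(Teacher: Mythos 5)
Votre traitement du point (2) repose sur une confusion qui fait échouer l'argument proposé, et c'est là le c\oe ur du problème. Vous proposez de rétracter l'inclusion $\ZZ\to\HH(\ZZ)$ puis de propager cette rétraction par théorie des obstructions; or le morphisme canonique $\ZZ\to\HH(\ZZ)$, $n\mapsto (1-T)^n$, est déjà un \emph{isomorphisme}: il n'y a rien à rétracter à ce niveau. La vraie difficulté est ailleurs: l'évaluation en $\ZZ$ ne commute pas au passage aux faisceaux d'homotopie, car $H^1_{\fpqc}(\Spec(\ZZ),M\otimes\HH)\simeq M\otimes\hat{\ZZ}$ est non nul (corollaire \ref{c5}). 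On a en fait $\pi_i((X\otimes\ZZ)^{\uni}(\ZZ))\simeq \pi_i(X)\oplus\prod_p\pi_{i+1}(X)^{\wedge}_p$ (corollaire \ref{c4}); votre description de la rétraction comme $\pi_i(X)\otimes\rho$ identifie implicitement $\pi_i(F(\ZZ))$ à $(\pi_i F)(\ZZ)$ et ignore le facteur $\prod_p\pi_{i+1}(X)^{\wedge}_p$, que la rétraction doit précisément tuer. La construction du texte ne procède pas par obstructions (dont rien ne garantit l'annulation --- l'existence de la rétraction est un fait arithmétique non trivial, analogue à \cite[Theorem~0.2]{ma2}): elle passe par le carré cartésien adélique $(X\otimes\ZZ)^{\uni}(\ZZ)\simeq X_\QQ\times_{X_\A}\prod_p X_p^{S^1}$, où l'identification $(X\otimes\ZZ)^{\uni}(\hat{\ZZ}_p)\simeq X_p^{S^1}$ (espace des lacets \emph{libres}) provient de la rigidité du corollaire \ref{c22}, puis par la factorisation $\prod_p X_p^{S^1}\to\prod_p X_p\to X_\A$ obtenue via les points fixes homotopiques de Frobenius (lemme \ref{lfix}), et enfin par le carré de fracture classique $X\simeq X_\QQ\times_{X_\A}\prod_p X_p$. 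Notez de plus que votre suite spectrale de descente exige en entrée le calcul de $H^*_{\fpqc}(\Spec(\ZZ),\pi_i(X)\otimes\HH)$, calcul qui dans le texte est une \emph{conséquence} de la description adélique (le corollaire \ref{c5} se déduit du corollaire \ref{c4}) et non un préalable: votre schéma d'argument est circulaire.

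Pour le point (1), votre plan (cas d'Eilenberg--MacLane puis récurrence de Postnikov via Eilenberg--Moore) est bien celui du texte (théorème \ref{p3} et appendice \ref{appA}), mais deux étapes substantielles manquent. D'abord, le recollement des cas rationnel et $p$-adiques n'est pas automatique: connaître les fibres d'un champ au-dessus de $\QQ$ et des $\FF_p$ ne le détermine pas au-dessus de $\Spec(\ZZ)$. Il faut d'abord établir que le candidat $K(A\otimes\HH,n)$ est un champ \emph{affine} (proposition \ref{p2}), ce qui constitue l'essentiel du travail (tour des points fixes de Frobenius $\HH_k\subset\WW$, lemme de platitude \ref{l2}, annulation de $\lim^1\HH_k$ au lemme \ref{l1}), puis vérifier que $K(\ZZ,n)\to K(\HH,n)$ induit un isomorphisme en cohomologie par changement de base plat vers $\QQ$ et $\FF_p$ et récurrence sur $n$ (corollaire \ref{c1}). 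Ensuite, votre récurrence ne traite que les tronqués $X_{\leq n}$; le passage à $X=\holim_n X_{\leq n}$ exige $(X\otimes\ZZ)^{\uni}\simeq\holim_n(X_{\leq n}\otimes\ZZ)^{\uni}$ ainsi que le contrôle du $\lim^1$ sur les faisceaux d'homotopie, lequel repose sur la commutation de la faisceautisation \fpqc aux limites dénombrables (corollaire \ref{ca}). Les points (3) et (4), déduits formellement de (2) comme au corollaire \ref{c7}, sont corrects une fois (2) acquis dans toute sa force, c'est-à-dire avec la compatibilité de la rétraction aux morphismes induits en homotopie.
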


Il est intéressant de contempler la formule $(1)$
en parallèle des questions considérées dans \cite{poursuite}. En effet, 
une des questions centrales posées par Grothendieck est de savoir
si les groupes d'homotopie de la schématisation de $X$  
doivent posséder, ou non, des structures de $\OO$-modules
(en tant que faisceaux sur le gros site des schémas affines).
Notre résultat répond par la négative, mais en revanche 
montre que ces groupes d'homotopie possèdent des
structures de $\HH$-modules naturelles. Le schéma
en groupes $\HH$ est en réalité le groupe additif sous-jacent
d'un schéma en anneaux, et est de plus muni d'un morphisme
de faisceaux d'anneaux $\HH \longrightarrow \OO$ canonique. Nous aimons
penser à $\HH$ comme au \emph{vrai faisceau structural}, que nous
proposons d'appeler le \emph{faisceau structural de Hilbert}. 

L'existence de la rétraction $(2)$ est une conséquence d'une
description explicite de l'espace $(X\otimes \ZZ)^{\uni}(\ZZ)$, et icelle
est elle-même conséquence de $(1)$ et de techniques de descente
adélique et fidèlement plate.
Cette description est similaire avec 
le résultat analogue dans le cadre des $E_\s$-algèbres
démontré dans \cite[Theorem~0.2]{ma2}, bien que ces deux 
résultats soient indépendants l'un de l'autre. En particulier, 
nous pensons que le théorème \ref{ti} combiné aux résultats
de \cite{ma2} implique que le morphisme naturel
$$\Map_{CAlg^{\Delta}}(C^*(X,\ZZ),\ZZ) \longrightarrow
\Map_{E_\s}(C^*(X,\ZZ),\ZZ),$$
qui compare espaces de morphismes pour les anneaux commutatifs cosimpliciaux 
et
pour  
les $E_\s$-algèbres, est toujours une équivalence (pour $X$ 
simplement connexe et de type fini). Ceci est tout à fait remarquable, 
étant donné que l'$\s$-foncteur de normalisation, des anneaux
commutatifs cosimpliciaux vers les $E_\s$-algèbres, n'est pas
pleinement fidèle en général. Par ailleurs, il 
nous importe de noter ici que les points
$(3)$ et $(4)$ sont conséquences des résultats de \cite{ma2}, transportés
à l'aide de l'$\s$-foncteur de normalisation, mais les preuves que nous
en donnons sont différentes (et indépendantes) et se basent sur 
les résultats $(1)-(2)$ et des techniques générales de la théorie des
champs affines de \cite{to}. Enfin, une comparaison précise 
avec \cite{ma2} demanderait certains efforts supplémentaires, 
efforts que nous ne ferons pas dans ce travail. Par exemple, 
le point $(3)$ ci-dessus est une conséquence d'un énoncé plus fort, 
à savoir l'existence d'une rétraction sur les espaces de morphismes
(voir le corollaire \ref{c7}). L'existence de cette rétraction 
est aussi démontrée dans \cite{ma2} dans le cadre des
$E_\s$-algèbres, mais il ne semble pas immédiat de montrer que ces deux
rétractions soient compatibles. \\

Pour terminer cette introduction, signalons d'étroites relations
avec les travaux \cite{ek} et \cite{mrt}. Le point $(1)$ 
du théorème \ref{ti}, et la propriété universelle de l'affinisation, 
impliquent en particulier que le morphisme naturel
$$H^*(K(\HH,n),\OO) \longrightarrow H^*(K(\ZZ,n),\ZZ)$$
est un isomorphisme pour tout $n$. Le membre de droite est la cohomologie
usuelle des espaces d'Eilenberg-MacLane, alors que le membre
de gauche se calcule à l'aide d'un complexe explicite 
dont le terme de degré $p$ vaut $B^{\otimes p^n}$, où $B$ est l'anneau
des fonctions sur $\HH$, c'est-à-dire l'anneau des polynômes à valeurs
entières. Dans \cite{ek} l'auteur montre aussi que la cohomologie
de $K(\ZZ,n)$, et plus généralement de tout espace nilpotent
de type fini, se calcule à l'aide de 
\emph{cocycles numériques}, c'est-à-dire
provenant de polynômes à valeurs entières. Il y a fort à parier 
que nos résultats sont ainsi très proches de ceux de \cite{ek}, bien 
que la différence de contextes rende une comparaison précise malaisée.
Par ailleurs, le point $(1)$ de notre théorème \ref{ti} implique
en particulier que $(S^1 \otimes \ZZ)^{\uni}\simeq K(\HH,1)$. Cette
formule est déjà démontrée dans \cite{mrt}, tout au moins
dans un cadre $p$-local. La preuve que nous en donnons (voir
le corollaire \ref{c1}) suit la même stratégie mais demande
une étude plus fine pour traiter le cas global sur $\ZZ$. 
Notons aussi que dans \cite{mrt} le champ $K(\HH,1)$
est l'objet sous-jacent du \emph{cercle filtré}, qui joue un rôle
crucial pour la construction de la fameuse filtration HKR. Cette filtration
est ici induite par la filtration naturelle sur le groupe $\HH$, qui 
sur son anneau de fonctions n'est autre que la filtration induite
par le degré des polynômes. Cela suggère que le cercle filtré
de \cite{mrt} n'est qu'un cas particulier d'une filtration 
qui existe naturellement sur le champ $(X\otimes \ZZ)^{\uni}$
pour tout espace $X$, et dont la filtration induite sur le groupes
d'homotopie serait celle induite par la filtration existante sur
$\HH$ à travers la formule $(1)$ du théorème \ref{ti}. Ce point
particulier n'est pas traité dans ce travail et fera l'objet d'investigations
futures.
\vskip\baselineskip
\noindent\textbf{Remerciements.} Je remercie tout particulièrement Joseph Tapia, pour
de très nombreuses discussions sur les vecteurs de Witt, qui m'ont, au cours
de ces années, convaincus de leur importance pour la question
de la schématisation des types d'homotopie de \cite{poursuite}.
Je remercie chaleureusement Tasos Moulinos et 
Marco Robalo, dont les multiples discussions sur le cercle filtré
de \cite{mrt} ont été sources d'inspiration pour un certain nombre de preuves 
des résultats de ce travail. 

\section{Rappels sur les champs affines}

On note $St_\ZZ$  l'$\s$-catégorie des champs \fpqc
sur le gros site des schémas affines. Pour tout
espace $X$ nous noterons encore $X \in St_\ZZ$ le champ
constant associé.
On rappelle que 
l'$\s$-catégorie des champs affines (au-dessus de $\Spec(\ZZ)$)
est la plus petite sous-$\s$-catégorie pleine de $St_\ZZ$ 
qui contient les champs $K(\mathbb{G}_a,n)$ pour tout $n\geq 0$ et 
qui est stable par limites homotopiques ($\UU$-petites, voir notre appendice \ref{appB}).
Cette $\s$-catégorie sera notée $ChAff \subset St_\ZZ$. 

On dispose d'un $\s$-foncteur de sections globales
$\Gamma=(-)(\ZZ) : St_\ZZ \longrightarrow Top,$
de l'$\s$-catégorie des champs vers celle 
des types d'homotopie qui, à un champ $F$, vu comme
foncteur sur la catégorie des anneaux commutatifs, associe
$F(\ZZ)$. Cet $\s$-foncteur possède un adjoint à gauche
$Top \longrightarrow St_\ZZ$ qui à un espace $X$ associe
le champ constant correspondant que nous noterons encore simplement par $X$.  

Nous pouvons retreindre $\Gamma$ à l'$\s$-catégorie $ChAff$ des champs affines.
\emph{L'$\s$-foncteur d'affinisation} est alors l'adjoint à gauche
du foncteur $\Gamma$
$$(-\otimes \ZZ)^{\uni} : Top \longrightarrow ChAff.$$
L'existence de cet $\s$-foncteur (modulo des questions d'univers que nous
ignorons ici) est démontrée
dans \cite{to}, et on peut voir qu'il est donné explicitement par la formule suivante
$$(X\otimes \ZZ)^{\uni}=\mathbb{R}\Spec(C^*(X,\ZZ)).$$
Ici, $C^*(X,\ZZ)$ désigne l'anneau cosimplicial commutatif de cohomologie
de $X$, qui est concrètement donné par le diagramme cosimplicial
$[n] \mapsto \ZZ^{X_n}$, où $X_n$ est l'ensemble des $n$-simplexes
de $X$. Le spectre $\mathbb{R}\Spec(C^*(X,\ZZ))$ est quand à lui le foncteur
sur les anneaux commutatifs défini par 
$$\mathbb{R}\Spec(C^*(X,\ZZ))(R) = \Map(C^*(X,\ZZ),R),$$
où le Map du membre de droite désigne ici le \emph{mapping space}
de la catégorie de modèles des anneaux cosimpliciaux commutatifs
(ou de manière équivalente les espaces de morphismes
de l'$\s$-catégorie d'iceux). L'ensemble
simplicial $\Map(C^*(X,\ZZ),R)$ se décrit explicitement comme 
$\Hom(Q(C^*(X,\ZZ)),C^*(\Delta^*,R))$, où $Q$ est un foncteur de remplacement cofibrant, 
et $C^*(\Delta^*,R)$ est l'objet simplicial (dans la catégorie des
anneaux commutatifs cosimpliciaux) donné par $[n] \mapsto C^*(\Delta^n,R)$. \\

Nous rappelons enfin les faits suivants
démontrés dans \cite{to}.

\begin{enumerate}

\item La sous-$\s$-catégorie $ChAff \subset St_\ZZ$ est stable
par limites homotopiques arbitraires ($\UU$-petites).

\item Un champ $F\in St_\ZZ$ est affine si et seulement s'il existe
un anneau commutatif cosimplicial $R^*$ et une équivalence
$F \simeq \mathbb{R}\Spec(R^*)$. 

\item Les champs $K(\mathbb{G}_a,n)$ sont affines pour tout $n\geq 0$ et sont  donnés par 
$K(\mathbb{G}_a,n) \simeq \mathbb{R}\Spec(\ZZ[x_n]),$
où $\ZZ[x_n]$ est l'anneau commutatif cosimplicial libre sur un générateur
en degré $n$.

\item Pour tout espace $X$ simplement connexe et 
de type fini (i.e. $\pi_i(X)$ est de type fini pour tout $i>1$) le morphisme
naturel
$$X \longrightarrow (X\otimes \ZZ)^{\uni}(\QQ)$$
identifie $(X\otimes \ZZ)^{\uni}(\QQ)$
avec la rationalisation $X_\QQ$ de $X$. C'est-à-dire que les 
groupes d'homotopie de $(X\otimes \ZZ)^{\uni}(\QQ)$ sont les rationalisés
de ceux de $X$.

\item Pour un espace $X$ simplement connexe et 
de type fini et tout corps algébriquement clos $k$ de caractéristique $p>0$ le morphisme
naturel
$$X \longrightarrow (X\otimes \ZZ)^{\uni}(k)$$
identifie $(X\otimes \ZZ)^{\uni}(k)$
avec la complétion $p$-adique $X_p$ de $X$. C'est-à-dire que les 
groupes d'homotopie de $(X\otimes \ZZ)^{\uni}(k)$ sont les complétés $p$-adiques
de ceux de $X$. Plus précisément le champ 
$(X\otimes k)^{\uni}$ est constant de fibre l'espace $X_p$, complété $p$-adique
de $X$ (voir corollaire \ref{ca2}).

\end{enumerate}

Pour terminer, rappelons le fait suivant, que nous utiliserons à plusieurs
reprises dans la suite de cet article. Il s'agit d'une conséquence de 
\cite[th.~2.2.9]{to} qui affirme qu'une affinisation est 
une $\OO$-localisation (voir la remarque après \cite[d\'efi.~2.3.1]{to}).

\begin{prop}\label{prapp}
Soit $X$ un espace topologique et $F$ un champ affine
muni d'un morphisme $u : X \to F(\ZZ)$ $($ou 
de manière équivalente du champ
constant $X$ vers $F)$. Alors les deux conditions
suivantes sont équivalentes.
\begin{enumerate}
\item Le morphisme déduit de $u$ par adjonction
$$(X\otimes \ZZ)^{\uni} \longrightarrow F$$
est une équivalence.

\item  Le morphisme induit par $u$ en cohomologie
$$u^* : H^*(F,\OO) \longrightarrow H^*(X,\ZZ)$$
est bijective.
\end{enumerate}
\end{prop}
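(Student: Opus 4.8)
Le plan est de déduire l'équivalence des deux conditions du fait, rappelé dans l'énoncé, que l'affinisation est une $\OO$-localisation (\cite[th.~2.2.9]{to}). Commençons par interpréter les deux membres de la condition $(2)$ au niveau des anneaux cosimpliciaux. Comme $F$ est affine, le fait $(2)$ des rappels ci-dessus fournit une équivalence $F\simeq\mathbb{R}\Spec(A^*)$ pour un anneau commutatif cosimplicial $A^*$, et la théorie des champs affines de \cite{to} identifie $H^*(F,\OO)$ à la cohomologie $H^*(A^*)$ du complexe cosimplicial sous-jacent. De même, $(X\otimes\ZZ)^{\uni}=\mathbb{R}\Spec(C^*(X,\ZZ))$ et l'on dispose d'une identification $H^*((X\otimes\ZZ)^{\uni},\OO)\simeq H^*(C^*(X,\ZZ))=H^*(X,\ZZ)$. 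Cette dernière égalité est d'ailleurs celle qui donne son sens au membre de droite de $(2)$ : la cohomologie $H^*(X,\OO)$ du champ constant $X$ coïncide avec la cohomologie singulière $H^*(X,\ZZ)$, puisque $K(\mathbb{G}_a,n)(\ZZ)\simeq K(\ZZ,n)$ (le groupe $\mathbb{G}_a$ vérifiant $\mathbb{G}_a(\ZZ)=\ZZ$ et étant affine sur $\Spec(\ZZ)$).

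Ensuite, je factoriserais le morphisme $u$ à travers l'unité de l'adjonction. Notons $\eta : X \to (X\otimes\ZZ)^{\uni}$ le morphisme d'adjonction et $\tilde u : (X\otimes\ZZ)^{\uni}\to F$ le morphisme de la condition $(1)$, de sorte que $u=\tilde u\circ\eta$. En appliquant le foncteur $H^*(-,\OO)$, on obtient une factorisation
$$u^* = \eta^*\circ\tilde u^*,$$
où $\tilde u^* : H^*(F,\OO)\to H^*((X\otimes\ZZ)^{\uni},\OO)$ et $\eta^* : H^*((X\otimes\ZZ)^{\uni},\OO)\to H^*(X,\OO)=H^*(X,\ZZ)$. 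Le point clef, fourni par \cite[th.~2.2.9]{to}, est alors double. D'une part, l'unité $\eta$ est une $\OO$-équivalence, c'est-à-dire que $\eta^*$ est un isomorphisme ; ceci découle directement de l'identification $H^*((X\otimes\ZZ)^{\uni},\OO)\simeq H^*(X,\ZZ)$ du paragraphe précédent, compatible à $\eta$. D'autre part, comme $ChAff$ est la localisation de $St_\ZZ$ le long des $\OO$-équivalences et que les champs affines en sont précisément les objets locaux, un morphisme entre deux champs affines est une équivalence si et seulement si c'est une $\OO$-équivalence. Appliqué à $\tilde u$, dont la source et le but sont affines, ce principe affirme que $\tilde u$ est une équivalence si et seulement si $\tilde u^*$ est un isomorphisme.

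La conclusion devient alors purement formelle. Puisque $\eta^*$ est un isomorphisme, la factorisation $u^*=\eta^*\circ\tilde u^*$ montre que $u^*$ est un isomorphisme si et seulement si $\tilde u^*$ l'est, ce qui, par le point clef ci-dessus, équivaut à ce que $\tilde u$ soit une équivalence. On obtient ainsi l'équivalence de $(1)$ et $(2)$. L'obstacle principal ne réside pas dans cette manipulation formelle mais entièrement dans le résultat sous-jacent \cite[th.~2.2.9]{to} : le fait que les équivalences entre champs affines soient détectées par la cohomologie à coefficients dans $\OO$, autrement dit que $\mathbb{R}\Spec$ soit conservatif sur la classe des objets considérés. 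Ce point repose in fine sur la description des équivalences faibles d'anneaux commutatifs cosimpliciaux comme les quasi-isomorphismes des complexes sous-jacents, et c'est là qu'est concentré tout le travail ; l'argument ci-dessus se contente de le transporter à la situation relative d'un morphisme $u:X\to F$.
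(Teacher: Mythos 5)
Votre démonstration est correcte et suit essentiellement la même voie que celle de l'article : dans les deux cas, tout repose sur le fait que la cohomologie à coefficients dans $\OO$ est représentable par les champs $K(\mathbb{G}_a,n)$, que tout champ affine est une limite petite de tels champs (donc est $\OO$-local), et que l'unité $X \to (X\otimes \ZZ)^{\uni}$ est une $\OO$-équivalence d'après \cite[th.~2.2.9]{to}. La preuve de l'article n'est qu'une version condensée de votre argument, laissant implicites la factorisation par l'unité et l'argument de type Yoneda entre objets $\OO$-locaux (seule l'inclusion \emph{affine} $\Rightarrow$ \emph{$\OO$-local} est d'ailleurs nécessaire, et non l'identification exacte des objets locaux que vous affirmez en passant).
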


\begin{proof}
Il s'agit de remarquer que la cohomologie
à coefficients dans $\OO$ est représentable par les
champs $K(\mathbb{G}_a,n)$
$$H^{n-i}(F,\OO) \simeq \pi_i(\Map(F,K(\mathbb{G}_a,n))) \qquad
H^{n-i}(X,k) \simeq \pi_i(\Map(X,K(\mathbb{G}_a,n))),$$
et d'utiliser que tout champ affine est une limite
petite de champs de la forme $K(\mathbb{G}_a,n)$.
\end{proof}

\section{Le groupe additif de Hilbert $\HH$}

On note $B$ le sous-anneau de $\mathbb{Q}[X]$ form\'e
des polynômes $P$ tels que $P(\Z) \subset \Z$. L'anneau $B$ est l'anneau
binomial libre sur un générateur, et on renvoie
à \cite{ell} pour la notion générale d'anneaux binomiaux. On rappelle que
$B$ est un $\Z$-module libre de base les polynômes de Hilbert
$${X \choose n} := \frac{X(X-1).\dots.(X-n+1)}{n!}.$$

L'anneau $B$ est une $\Z$-algèbre de Hopf pour la comultiplication 
induite par la comultiplication usuelle sur $\mathbb{Q}[X]$
qui envoie $X$ sur $X\otimes 1 + 1 \otimes X$. L'algèbre de Hopf $B$ n'est autre que 
le dual $\ZZ$-linéaire de l'algèbre de Hopf complète correspondant
au groupe formel $\hat{\mathbb{G}}_m$. 

\begin{df}\label{d1}
Le \emph{groupe additif de Hilbert} $\HH$ est le schéma en groupes sur
$\Spec(\ZZ)$ d\'efini par
$$\HH:=\Spec(B).$$
\end{df}

On note que $\HH$ est par définition un schéma en groupes 
affine et plat au-dessus de $\Spec(\Z)$. Il s'agit du dual 
de Cartier du groupe formel $\hat{\mathbb{G}}_m$ (voir \cite{car}). Ses fibres au-dessus
de corps peuvent être décrites explicitement comme suit.
Soit $k$ un corps de caractéristique $p\geq 0$. Ou bien 
$p=0$ et alors $\HH\otimes_\Z k$ est isomorphe au groupe
additif $\mathbb{G}_{a,k}$ au-dessus de $\Spec(k)$. Ou bien 
$p>0$ et alors $\HH\otimes_\Z k$ est isomorphe au groupe
proétale $\hat{\Z}_p$ des entiers $p$-adiques (voir le corollaire \ref{cp1}
pour une preuve de ce fait). 

Il existe une définition alternative de $\HH$ comme vecteurs de Witt
fixes par tous les endomorphismes de Frobenius. On note, pour tout anneau
commutatif $R$,
$\WW(R)$ l'anneau des gros vecteurs de Witt à coefficients dans $R$
(voir par exemple \cite{haz}). Le
foncteur $R \mapsto \WW(R)$ est représentable par un schéma affine, et 
nous considérerons principalement $\WW$ comme un schéma en groupes affine
(au-dessus de $\Spec(\Z)$) pour la loi additive. On rappelle que le groupe
$\WW(R)$ s'identifie au groupe multiplicatif $1+TR[[T]]$, des séries
formelles à coefficients dans $R$ de termes constants égaux à 1.

Le foncteur $\WW$ est muni, pour chaque entier $n>0$, d'un endomorphisme
de Frobenius $F_n : \WW \longrightarrow \WW$. Cet endomorphisme est caractérisé
par sa fonctorialité en $R$, la formule
$F_n(1-aT)=(1-a^nT)$ pou tout $a\in R$, et sa compatibilité avec la topologie naturelle 
sur $\WW(R)$ (topologie produit).
Les endomorphismes $F_n$ vérifient de plus
la formule $F_nF_m= F_{nm}$, et en particulier commutent entre eux.

\begin{prop}\label{p1}
Il existe un monomorphisme 
$$j : \HH \hookrightarrow \WW$$
qui identifie $\HH$ au sous-foncteur des points fixes
simultanés des endomorphismes $F_n$: pour tout anneau $R$
l'image $j(\HH(R))$ consiste en tous les $x\in \WW(R)$ tels
que $F_n(x)=x$ pour tout entier $n>0$.
\end{prop}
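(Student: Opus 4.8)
The plan is to exploit the two dual descriptions of $\HH$ that the excerpt sets up: $B$ is the $\ZZ$-linear dual of the complete Hopf algebra of $\hat{\mathbb{G}}_m$, and $\WW$ is, pointwise, the group $1+TR[[T]]$ of power series with constant term $1$. I would first pin down the natural transformation $j$ concretely. Since $B$ is spanned over $\ZZ$ by the Hilbert polynomials $\binom{X}{n}$, a point $x\in\HH(R)=\Hom_{\text{alg}}(B,R)$ is the same datum as a sequence $(a_n)_{n\geq 0}$ with $a_n=x\bigl(\binom{X}{n}\bigr)\in R$, $a_0=1$, subject to the binomial (comultiplication) constraints that encode that $x$ is an algebra map for the product of integer-valued polynomials. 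The natural candidate for $j$ sends such a point to the Witt vector given by the series whose coefficients are read off from these $a_n$; concretely I expect $j(x)$ to be the series $\sum_n a_n T^n$, viewing the additive group law on $B$ (coming from $X\mapsto X\otimes 1+1\otimes X$) as matching the multiplicative group law $1+TR[[T]]$ under the standard duality with $\hat{\mathbb{G}}_m$.

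Next I would verify that $j$ is a homomorphism of group schemes and a monomorphism. The homomorphism property should come formally from the fact that both group structures are the Cartier/Hopf dual of $\hat{\mathbb{G}}_m$, so it is a matter of checking that the comultiplication on $B$ (sending $X$ to $X\otimes 1+1\otimes X$) is intertwined with the multiplication of power series; this is where the identification of $\WW$ with $1+TR[[T]]$ does the bookkeeping. For the monomorphism property I would argue that the $a_n$ can be recovered functorially from the image series, so $j$ is injective on $R$-points for every $R$; equivalently, the corresponding map of representing rings is an epimorphism. Either formulation suffices to call $j$ a monomorphism of \fpqc sheaves.

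The substantive part is the identification of the image with the simultaneous fixed points of all the Frobenius operators $F_n$. I would use the defining characterization of $F_n$ recalled in the excerpt: $F_n(1-aT)=(1-a^nT)$, functoriality, and continuity for the product topology. The equation $F_n(x)=x$ for all $n$ is a strong rigidity condition on a power series $x=\prod(1-\alpha_i T)$ (heuristically over an algebraic closure), forcing the multiset of "roots" to be invariant under $\alpha\mapsto\alpha^n$ for every $n$; the only way a formal series can satisfy this for all $n$ simultaneously is to come from the very special combinatorial shape encoded by the integer-valued polynomials, i.e.\ to lie in $j(\HH(R))$. I would make this precise ghost-component by ghost-component: writing the ghost (Newton-power-sum) coordinates $w_d$ of a Witt vector, the condition $F_n(x)=x$ translates into a system of linear identities among the $w_d$, and solving this system over an arbitrary $R$ should single out exactly the sequences that are the image of a $\ZZ$-algebra map $B\to R$. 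The inclusion $j(\HH(R))\subseteq \{F_n(x)=x\}$ is the easy direction, following directly from $F_n\circ j=j$ by comultiplicativity; the reverse inclusion is the hard part.

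The main obstacle I anticipate is precisely this reverse inclusion over a \emph{general} base ring $R$, not merely over a field. Over a $\QQ$-algebra or an $\FF_p$-algebra the fixed-point condition can be analyzed through ghost components and the explicit fibre descriptions already mentioned in the excerpt ($\mathbb{G}_a$ in characteristic $0$, $\hat{\ZZ}_p$ in characteristic $p$), but to get a clean functorial identification of subfunctors I would need to control torsion and the interaction of all the $F_n$ integrally at once. My strategy for the obstacle is to reduce to showing that the two subfunctors of $\WW$ have the same ring of functions: the quotient of the Witt ring by the ideal generated by the relations $F_n(x)-x$ should be computed and matched with $B$. If the direct combinatorial matching proves delicate, I would instead prove equality of the subsheaves by checking it after base change to a faithfully flat cover and on the \fpqc site, using flatness of $\HH$ over $\Spec(\ZZ)$ to descend, thereby isolating the computation to the universal/generic situation where the relation between Hilbert polynomials and symmetric functions of the Witt roots is transparent.
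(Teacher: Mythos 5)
Your explicit formula for $j$ contains a sign error that is not a mere convention: with the paper's normalization $F_n(1-aT)=1-a^nT$, the involution $T\mapsto -T$ of $\WW$ does \emph{not} commute with the even Frobenii, so the two possible embeddings are not interchangeable. Concretely, your $j$ sends the canonical point $a$ to $\sum_n \binom{a}{n}T^n=(1+T)^a$, and already $F_2(1+T)=1-(-1)^2T=1-T\neq 1+T$; thus your image does not lie in the fixed points, and your ``easy direction'' $F_n\circ j=j$, which you present as formal (``by comultiplicativity''), fails as written. The correct map is $x\mapsto \sum_n(-1)^na_nT^n=(1-T)^{a}$, whose ghost components are all equal to $a$; with this sign, $F_n$-invariance is a genuine but easy computation, checked on the universal point in $\WW(B)$ using that $B$ is torsion-free (so the ghost map is injective there).

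The more serious gap is the one you yourself flag: the inclusion $\{x:F_n(x)=x\ \forall n\}\subseteq j(\HH(R))$ over an \emph{arbitrary} ring $R$. Your route (a) --- showing that $\ZZ[x_1,x_2,\dots]/\bigl(F_n^*(x_i)-x_i\bigr)\simeq B$ --- is exactly the content of the proposition, and your sketch reduces it to ghost-component identities, which cannot work integrally since the ghost map is not injective in the presence of torsion. Your fallback (b) does not repair this: $\QQ$ is not faithfully flat over $\ZZ$, field-valued fibers do not determine a subfunctor of $\WW$ (a module like $\QQ/\ZZ$ dies after tensoring with every field, so fiberwise vanishing of a discrepancy proves nothing without flatness of the fixed-point scheme $\WW^F$, which is essentially what is at stake), and any honest faithfully flat cover of $\Spec(\ZZ)$ still has algebras with torsion, so the computation is no more ``transparent'' there than over $\ZZ$. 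The paper avoids all of this by quoting Elliott: by \cite[Theorem~9.1~(3)]{ell} the functor $R\mapsto \WW(R)^F$ is right adjoint to the forgetful functor from binomial rings to rings, and $B$ is the free binomial ring on one generator, whence $\HH(R)=\Hom(B,R)\simeq \Hom_{Bin}(B,\WW(R)^F)\simeq \WW(R)^F$ functorially in $R$. To complete your argument you must either invoke this theorem or actually prove it; as it stands, the core of the statement is asserted rather than established.
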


\begin{proof}
L'anneau $\OO(\HH)$ est l'anneau
binomial libre à un générateur $\Bin(\ZZ)$ au sens de \cite{ell}.
On rappelle alors que le foncteur 
d'oubli des anneaux dans les anneaux binomiaux
possède un adjoint à droite, qui à $R$ associe
le sous-anneau $\WW(R)^F$ de $\WW(R)$ formé des points fixes
de tous les Frobenius (voir \cite[Theorem~9.1 (3)]{ell}). Ainsi, on a des bijections fonctorielles
en $R$
$$\HH(R) \simeq \Hom(\Bin(\Z),R) \simeq \Hom_{Bin}(\Bin(\Z),\WW(R)^F) \simeq
\Hom(\ZZ,\WW(R)^F)\simeq \WW(R)^F,$$
où les $\Hom$ désignent ici les ensembles de morphismes 
d'anneaux et $\Hom_{Bin}$ ceux d'anneaux binomiaux.

En termes plus explicite, cela fournit un isomorphisme
de foncteurs en groupes $\HH \simeq \WW^F \subset \WW$
qui peut se décrire de la manière suivante. Un élément
de $\HH(R)$ n'est autre qu'une suite d'éléments
$(a_1,a_2,\dots,a_n,\dots)$ qui vérifient les 
relations binomiales (voir par exemple \cite[Lemma~3.3]{ell}). \`A une telle suite la bijection
précédente associe le vecteur de Witt
$$(1-T)^{a_*}:=\sum_{i}(-1)^ia_iT^i \in \WW(R).$$
\end{proof}

La proposition \ref{p1} permet de préciser la structure du schéma en groupes
$\HH$ localement autour d'un nombre premier fixé $p$. Nous savons, d'après 
\cite[Proposition~6]{hes}, que le schéma en groupes $\WW_{\ZZ_{(p)}}:=\WW\times \Spec(\ZZ_{(p)})$,
restriction de $\WW$ au-dessus de $\Spec(\ZZ_{(p)})$, se décompose en un produit
infini
$$\WW_{\ZZ_{(p)}} \simeq \prod_{p\nmid n}\WW_{p^\s},$$
où $\WW_{p^\s}$ est le schéma en groupes des vecteurs de Witt $p$-typiques. Les
Frobenius $F_n$ opèrent sur ce produit de la manière suivante. Si $n$ est premier
à $p$, alors $F_n$ opère sur une suite d'éléments 
$(a_i)_{p\nmid i}$ par la formule $F_n(a_*)_i:=a_{ni}$. De plus, 
$F_p$ opère de manière diagonale par le $p$-ième Frobenius sur
$\WW_{p^\s}$.

Ainsi, la proposition \ref{p1} montre que le schéma en groupes 
$\HH_{\ZZ_{(p)}}$,
restriction de $\HH$ au-dessus de $\Spec(\ZZ_{(p)})$, est canoniquement isomorphe
au groupe des points fixes de $F_p$ opérant sur $\WW_{p^\s}$. Le schéma en groupes
$\HH_{\ZZ_{(p)}}$ est ainsi isomorphe au schéma en groupes
considéré dans \cite{mrt}, et noté $Fix$.

\begin{cor}\label{cp1}
Pour tout premier $p$ et tout entier $n$, soit $\HH_{\ZZ/p^n}$ la restriction
de $\HH$ au-dessus de $\Spec(\ZZ/p^n)$. On dispose d'un isomorphisme de faisceaux \fpqc au-dessus de $\Spec(\ZZ/p^n)$
$$\HH_{\ZZ/p^n} \simeq \hat{\ZZ}_p=\lim_k \ZZ/p^k,$$
où le membre de droite est le faisceau \fpqc constant associé
au groupe profini $\hat{\ZZ}_p$ des entiers $p$-adiques.
\end{cor}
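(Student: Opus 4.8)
The plan is to reduce the statement to an explicit computation of the Hopf algebra of functions on $\HH_{\ZZ/p^n}$, namely $B/p^nB = \OO(\HH)\otimes\ZZ/p^n$, and to identify it with the ring of locally constant $\ZZ/p^n$-valued functions on the profinite set $\hat\ZZ_p$. First I would observe that each Hilbert polynomial $\binom{X}{k}$, viewed as a function $\ZZ\to\ZZ$, is continuous for the $p$-adic topology: the Vandermonde identity gives $\binom{x+p^N}{k}\equiv\binom{x}{k}$ modulo $p^n$ as soon as $p^N>k$, so its reduction modulo $p^n$ factors through a finite quotient $\ZZ/p^m$ and extends continuously to $\hat\ZZ_p$. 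Reduction modulo $p^n$ then defines a ring homomorphism
\[
\rho : B/p^nB \longrightarrow \mathrm{Cont}(\hat\ZZ_p,\ZZ/p^n)=\colim_m (\ZZ/p^n)^{\ZZ/p^m},
\]
where $(\ZZ/p^n)^{\ZZ/p^m}$ is the algebra of all functions $\ZZ/p^m\to\ZZ/p^n$ and the colimit is taken along pullback by the projections $\ZZ/p^{m+1}\twoheadrightarrow\ZZ/p^m$.

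The heart of the argument is that $\rho$ is an isomorphism. Injectivity is elementary: if $P\in B$ satisfies $P(x)\equiv 0$ modulo $p^n$ for every $x\in\ZZ$, then $P/p^n\in\QQ[X]$ is again integer-valued, hence lies in $B$, so $P\in p^nB$ and $\rho$ has trivial kernel. Surjectivity is exactly Mahler's expansion theorem read modulo $p^n$: every continuous function $\hat\ZZ_p\to\ZZ_p$ is uniquely of the form $\sum_k a_k\binom{X}{k}$ with $a_k\to 0$, so a locally constant $\ZZ/p^n$-valued function is a finite $\ZZ/p^n$-combination of the $\binom{X}{k}$ and is therefore hit by $\rho$. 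I expect this surjectivity, that is, the comparison between the polynomial basis $\binom{X}{k}$ and the locally constant functions, to be the step requiring the most care, even though it is classical.

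It remains to pass to spectra and to match the group laws. Taking $\Spec$ turns the colimit of rings into a cofiltered limit of schemes,
\[
\HH_{\ZZ/p^n}=\Spec(B/p^nB)\simeq\lim_m\Spec\big((\ZZ/p^n)^{\ZZ/p^m}\big)=\lim_m\underline{\ZZ/p^m},
\]
where $\underline{\ZZ/p^m}$ denotes the constant finite group scheme over $\ZZ/p^n$; since the right-hand limit is by definition the constant \fpqc sheaf attached to $\lim_m\ZZ/p^m=\hat\ZZ_p$, this yields the desired identification of sheaves. Finally I would check that $\rho$ is compatible with the Hopf structures: the comultiplication on $B$ is dual to $X\mapsto X\otimes 1+1\otimes X$, and under $\rho$ it corresponds to the coproduct dual to addition on $\hat\ZZ_p$ precisely because of the Vandermonde relation $\binom{x+y}{k}=\sum_{i+j=k}\binom{x}{i}\binom{y}{j}$, which identifies the group structure on $\HH_{\ZZ/p^n}$ with that of $\hat\ZZ_p$. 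I would also note that this is consistent with, and refines for general $n$, the identification $\HH_{\ZZ_{(p)}}\simeq\WW_{p^\s}^{F_p}=Fix$ of \cite{mrt} recorded above, the case $n=1$ being a form of Artin–Schreier–Witt theory.
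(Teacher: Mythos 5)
Your proof is correct in substance, but it follows a genuinely different route from the paper's. The paper never computes $B/p^nB$ at all: it uses the identification of $\HH_{\ZZ_{(p)}}$ with the fixed points of $F_p$ on $p$-typical Witt vectors (Proposition \ref{p1} and the discussion preceding the corollary), remarks that $Id-F_p$ is formally étale because this can be tested over $\FF_p$, where the derivative of Frobenius vanishes, and concludes that $\HH_{\ZZ/p^n}$ is an affine, flat and formally étale group scheme over $\ZZ/p^n$; since base change along $\ZZ/p^n \to \FF_p$ is an equivalence between the categories of flat, formally étale affine schemes, the statement is inherited by rigidity from the known case over $\FF_p$ (Artin--Schreier--Witt, already available from \cite{to}). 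Your argument instead works directly from the definition $\HH=\Spec(B)$ and identifies $B\otimes\ZZ/p^n$ with the ring of locally constant $\ZZ/p^n$-valued functions on $\hat\ZZ_p$ via Mahler's theorem. This is more elementary and self-contained: it needs neither the Witt-vector embedding nor the base case over $\FF_p$, which it reproves uniformly for all $n$, and it yields an explicit description of the Hopf algebra $B/p^nB$; the price is the Mahler input and the Hopf-algebra bookkeeping, whereas the paper's deformation argument is much shorter given the machinery already in place. One quantitative slip should be fixed: the congruence $\binom{x+p^N}{k}\equiv\binom{x}{k} \pmod{p^n}$ does \emph{not} hold as soon as $p^N>k$ when $n\geq 2$; for instance $p=2$, $k=2$, $N=n=2$ gives $\binom{x+4}{2}-\binom{x}{2}=4x+6\equiv 2 \pmod 4$. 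Since $v_p\binom{p^N}{j}=N-v_p(j)$, the Vandermonde argument requires $N\geq n+\lfloor\log_p k\rfloor$; alternatively, simply note that $\binom{X}{k}$ maps $\ZZ_p$ to $\ZZ_p$ continuously (by density of $\ZZ$ in $\ZZ_p$), so its reduction modulo $p^n$ is locally constant and factors through a finite quotient by compactness. With that correction the rest of your argument, including the passage to spectra and the compatibility of comultiplications, goes through.
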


\begin{proof}
On sait qu'il existe une suite exacte
$$\xymatrix{
0 \ar[r] & \HH_{\ZZ/p^n} \ar[r] & \WW_{p^\s} \ar[r]^-{Id-F_p} & \WW_{p^\s}}$$
Le morphisme $Id-F_p$ est un morphisme formellement étale de schémas affines
et plats sur $\ZZ/p^n$, car cela peut se tester au-dessus 
du corps résiduel $\FF_p$ pour lequel la dérivée de $Id-F_p$ est 
partout l'identité. On a donc que $\HH_{\ZZ/p^n}$ est un schéma
en groupes affine plat et formellement étale au-dessus 
de $\ZZ/p^n$. Or, le foncteur 
de changement de bases le long de $\ZZ/p^n \longrightarrow \FF_p$
induit une équivalence de catégories entre les catégories des schémas affines
plats et formellement étales sur $\ZZ/p^n$ et sur $\FF_p$. Ainsi, 
comme $\HH_{\FF_p}$ est constant de fibre $\hat{\ZZ}_p$, il en est de même
de $\HH_{\ZZ/p^n}$.
\end{proof}

\section{L'affinisation de $K(\Z,n)$}

Nous avons vu que $\HH$ s'identifie au sous-groupe $\WW^F \subset \WW$
des points fixes simultanés des Frobenius $F_n$. Ceci va  maintenant nous
permettre de démontrer la proposition suivante.

\begin{prop}\label{p2}
Le champ $K(\HH,n)$ est un champ affine au sens de \cite{to}.
\end{prop}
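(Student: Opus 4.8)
The plan is to show $K(\HH,n)$ is affine by exhibiting it as a homotopy limit of affine stacks, using the fact that $ChAff$ is stable under $\UU$-petites homotopy limits (recalled fact (1) in the excerpt).

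Proposé de preuve:

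\begin{proof}
D'après la proposition \ref{p1}, le schéma en groupes $\HH$ s'identifie au sous-groupe $\WW^F \subset \WW$ des points fixes simultanés des Frobenius $F_n$, c'est-à-dire à la limite (intersection) des noyaux des morphismes $\mathrm{Id}-F_n : \WW \to \WW$. Comme les $F_n$ commutent entre eux et vérifient $F_nF_m=F_{nm}$, il suffit en réalité de considérer les Frobenius $F_p$ indexés par les nombres premiers, et $\HH$ s'écrit comme une limite du diagramme de schémas en groupes affines construit à partir de $\WW$ et des endomorphismes $\mathrm{Id}-F_p$. La stratégie est alors de propager cette présentation limite de $\HH$ à travers le foncteur $K(-,n)$, puis d'invoquer la stabilité de $ChAff$ par limites homotopiques.

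La première étape est d'établir que le champ $K(\WW,n)$ est affine. Le groupe $\WW$ étant un schéma en groupes affine et plat sur $\Spec(\ZZ)$, son algèbre de fonctions $\OO(\WW)$ est une $\ZZ$-algèbre de Hopf plate; le groupe $\WW$ lui-même, vu comme champ (en degré $0$), est affine car représentable par un schéma affine, et l'on dispose d'une présentation $\WW \simeq \mathbb{R}\Spec(\OO(\WW))$ au moyen de l'algèbre cosimpliciale constante. Pour passer en degré supérieur, j'utiliserais que $K(G,n)$ pour un schéma en groupes affine plat commutatif $G$ se construit comme une limite (géométrique, via le nerf itéré ou la construction bar) d'objets affines: chaque $K(G,n)$ est la réalisation d'un objet simplicial dont les termes sont des puissances de $G$, donc affines, et la stabilité de $ChAff$ par limites homotopiques (fait (1)) garantit que la totalisation correspondante reste affine. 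Le cas $G=\mathbb{G}_a$ étant acquis par le fait (3), le point technique est de traiter le cas $G=\WW$ de façon analogue.

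La deuxième étape consiste à écrire $K(\HH,n)$ comme une limite homotopique de champs de la forme $K(\WW,n)$. Puisque $\HH = \lim_p \ker(\mathrm{Id}-F_p)$ dans la catégorie des schémas en groupes affines plats, et puisque le foncteur $K(-,n)$ (restreint aux groupes affines plats commutatifs) commute aux limites, on obtient une présentation de $K(\HH,n)$ comme limite homotopique d'un diagramme dont les objets sont des $K(\WW,n)$ reliés par les morphismes induits par $\mathrm{Id}-F_p$. Chacun de ces objets étant affine par la première étape, la stabilité de $ChAff$ par limites homotopiques $\UU$-petites permet de conclure que $K(\HH,n)$ est affine.

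La principale difficulté que j'anticipe réside dans le passage de la présentation de $\HH$ comme limite de schémas en groupes à une présentation de $K(\HH,n)$ comme limite homotopique de champs, en particulier pour s'assurer que la formation de $K(-,n)$ préserve les suites exactes et les points fixes au niveau champêtre (et non seulement au niveau des $R$-points). Il convient notamment de vérifier que le morphisme $\mathrm{Id}-F_p$ induit sur les $K(\WW,n)$ un diagramme dont la limite homotopique calcule bien $K(\HH,n)$, ce qui repose sur le fait que $\HH$ est le noyau d'un morphisme de schémas en groupes plats et que la construction $K(-,n)$ est compatible à de telles fibrations; la platitude de $\WW$ et de $\HH$ sur $\ZZ$ est ici essentielle pour contrôler le comportement en cohomologie.
\end{proof}
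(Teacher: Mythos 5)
Votre stratégie globale coïncide avec celle du texte : partir de la proposition \ref{p1} pour écrire $\HH$ comme points fixes des Frobenius dans $\WW$, en déduire une présentation de $K(\HH,n)$ comme limite homotopique de champs affines, puis conclure par la stabilité de $ChAff$ par limites homotopiques. Mais les deux étapes qui portent tout le poids de la preuve sont, dans votre rédaction, l'une erronée et l'autre laissée en suspens.

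Première lacune : votre argument pour l'affinité de $K(\WW,n)$ ne tient pas. La construction bar (le nerf itéré) exprime $K(G,n)$ comme une \emph{colimite} homotopique d'un objet simplicial à termes affines, et non comme une limite ; or $ChAff$ est stable par limites homotopiques, nullement par colimites. Si votre argument était valable, il montrerait aussi que $K(\mathbb{G}_m,1)=B\mathbb{G}_m$ est un champ affine, ce qui est faux : la cohomologie de $B\mathbb{G}_m$ à coefficients dans $\OO$ est concentrée en degré $0$, donc son affinisation est un point. L'affinité de $K(G,n)$ est une propriété spéciale des groupes de type $\mathbb{G}_a$ et de leurs extensions et limites (groupes unipotents ou pro-unipotents) : pour $\WW$, la route correcte passe par $\WW \simeq \lim_m \WW_m$, où les groupes de Witt tronqués $\WW_m$ sont des extensions itérées de $\mathbb{G}_a$, de sorte que chaque $K(\WW_m,n)$ est affine via des suites de fibrations (qui, elles, sont bien des limites homotopiques), puis par un passage à la limite sur $m$.

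Seconde lacune : l'affirmation que $K(-,n)$ \og commute aux limites \fg{} est précisément le point non trivial, et elle est fausse en général. Deux problèmes distincts se posent. (a) Pour que la fibre homotopique de $Id-F_p : K(\WW,n) \to K(\WW,n)$ soit $K(\ker(Id-F_p),n)$, il faut que $Id-F_p$ soit un épimorphisme de faisceaux \fpqc ; sinon la fibre acquiert un $\pi_{n-1}$ égal au conoyau. Le texte obtient cette surjectivité en démontrant la fidèle platitude de ces morphismes via le lemme \ref{l2} (critère de platitude testé fibre à fibre sur les corps). (b) Pour l'intersection infinie sur tous les premiers, il faut l'annulation d'un $\lim^1$ dans le topos \fpqc : c'est le lemme \ref{l1}, dont la preuve (représentabilité des quotients $\WW/\HH_k$ par des schémas affines plats, fidèle platitude de $\WW \to \lim_k(\WW/\HH_k)$, et commutation de la faisceautisation aux limites dénombrables et aux $\lim^1$, appendice \ref{appB}) constitue le c\oe ur technique de la proposition. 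Vous signalez cette difficulté dans votre dernier paragraphe, mais invoquer la platitude de $\WW$ et de $\HH$ ne la résout pas. Notez enfin que le texte organise les points fixes en une tour emboîtée $\HH_k$ (points fixes des $k$ premiers Frobenius) précisément pour que chaque étape soit une seule fibration et que la limite finale soit une tour dénombrable, forme à laquelle les techniques de $\lim^1$ s'appliquent ; votre diagramme indexé par tous les premiers simultanément rendrait ce contrôle plus délicat encore.
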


\begin{proof}
On ordonne les nombres premiers
$2=p_2 < p_3 < p_4 \dots < p_k < p_{k+1} \dots$.
Pour un entier $k\geq 2$ le sous-foncteur
en groupes $\HH_k \subset \WW$ est alors défini comme les points fixes
de tous les Frobenius $F_{p_{i}}$ avec $i\leq k$. On dispose ainsi d'une
suite décroissante de sous-foncteurs en groupes
$$\HH \subset \dots \subset \HH_k \subset \HH_{k-1} \subset \dots \subset
\HH_2 \subset \HH_1:=\WW.$$
Il s'agit d'une suite de sous-schémas en groupes affines, fermés
dans $\WW$, et par définition nous avons
un isomorphisme de faisceaux \fpqc
$$\HH \simeq \lim_k{\HH_k}.$$

Nous utiliserons à plusieurs reprises le résultat classique 
suivant, pour lequel
nous ne connaissons pas de référence et dont nous incluons donc une
preuve. Pour des schémas de type fini il s'agit
d'un cas particulier du critère de platitude par fibres 
\cite[cor.~11.3.11]{ega4-3}.

\begin{lem}\label{l2}
Soit $f : X \longrightarrow Y$ un morphisme de schémas affines et plats sur $\ZZ$. 
Si pour tout corps $k$ le morphisme induit par
changement de bases
$$X\times \Spec(k) \longrightarrow Y\times \Spec(k)$$
est plat (resp. fidèlement plat), alors $f$ est plat (resp. fidèlement plat).
\end{lem}

\begin{proof}
Notons $X\times \Spec(k)$ par $X_k$, et 
de même $Y \times \Spec(k)$ par $Y_k$ (pour tout corps $k$). 
De même notons $X=\Spec(B)$ et $Y=\Spec(A)$.

Soit $A \rightarrow B$ le morphisme d'anneaux correspondant
à $f$ et $M$ un $A$-module et commençons par l'énoncé 
de platitude. Il nous faut donc montrer que le complexe $N:=M\otimes_A^{\mathbb{L}}B$ 
est cohomologiquement concentré en degré zéro. Pour cela, on prend tout d'abord
$k=\mathbb{Q}$, et on voit que $N\otimes_{\ZZ}\QQ$ est 
cohomologiquement concentré en degré $0$. 
Ainsi, les groupes de cohomologie $H^i(N)\simeq H^i(M)\otimes \QQ$ 
sont tous nuls pour $i<0$, et ainsi $H^i(M)$ est de torsion pour $i<0$.

De plus, l'hypothèse implique que $X_{\FF_p}$ est plat sur $Y_{\FF_p}$, et la formule
du changement de base montre alors que $N\otimes_{\ZZ}^{\mathbb{L}}\ZZ/p$ est cohomologiquement concentré
en degrés $[-1,0]$ pour tout premier $p$. Ceci montre tout d'abord 
que $N$ est lui-même cohomologiquement 
concentré en degrés $[-1,0]$. Par ailleurs, comme complexe
de groupe abélien on a $N\simeq H^{-1}(N)[1]\oplus H^0(N)$ avec 
$H^{-1}(N)$ de torsion.
Supposons $H^{-1}(N)\neq 0$. Alors il possède
un élément non-nul et de $p$-torsion pour un certain premier $p$. Comme 
$H^{-2}(N\otimes_{\ZZ}^{\mathbb{L}}\ZZ/p)$ contient 
$Tor_1(H^{-1}(N),\ZZ/p)$ en facteur direct il est donc lui aussi non-nul,
ce qui est une contradiction. 
Ainsi $H^{-1}(N)=0$ et on a donc bien que $A \rightarrow B$ est plat. 
Enfin, pour l'énoncé de fidèle platitude, on sait déjà que $f$ est plat
et il reste donc à voir la surjectivité de $f$ sur les points à valeurs dans des corps. Mais
ceci est clairement une conséquence de l'hypothèse. 
\end{proof}

Pour tout $k$, on dispose d'une suite exacte de faisceaux abéliens
$$\xymatrix{ 0\ar[r] & 
\HH_k \ar[r] & \HH_{k-1} \ar^-{Id-F_{p_k}}[r] & \HH_{k-1}.}$$
La décomposition en produit infini $\WW_{\FF_p} \simeq \prod_{p\nmid n}\WW_{p^\s}$
comme schéma en groupes au-dessus de $\Spec(\FF_p)$, et la description
de Frobenius sur ce produit (voir la discussion
avant le corollaire \ref{cp1}) montrent que la suite exacte ci-dessus devient aussi
exacte à droite lorsque restreinte à $\FF_p$. 
Une application du lemme 
\ref{l2} montre que pour tout $k$ le morphisme $Id-F_{p_k}$ est fidèlement plat
et l'on dispose ainsi d'une suite exacte courte de faisceaux \fpqc
$$\xymatrix{ 0\ar[r] & 
\HH_k \ar[r] & \HH_{k-1} \ar^-{Id-F_{p_k}}[r] & \HH_{k-1} \ar[r] & 0.}$$
Par récurrence sur $k$ cela montre aussi que tous les $\HH_k$ sont
des schémas en groupes affines et plats sur $\Spec(\ZZ)$.

\begin{lem}\label{l1}
Pour tout $n \geq 1$, le morphisme naturel
$$K(\HH,n) \longrightarrow \holim_k K(\HH_k,n)$$
est un équivalence de champs.
\end{lem}

\begin{proof}
Commençons par 
remarquer que le foncteur de faisceautisation
pour la topologie \fpqc commute avec tout type 
de limites dénombrables. Il
commute aussi avec les foncteurs dérivés $\lim^i$ pour les diagrammes
dénombrables (voir l'appendice \ref{appA} pour ces deux assertions).
En particulier, pour
une tour de morphismes entre faisceaux en groupes abéliens
$$\xymatrix{\dots \ar[r]  & F_k \ar[r] &  F_{k-1} \ar[r] &  \dots \ar[r] & F_1}$$
les morphismes naturels
$$a(\lim_k F_k) \longrightarrow \lim_kF_k \qquad
a({\lim_k}^iF_k) \longrightarrow {\lim_{k}}^iF_k$$
sont des isomorphismes de faisceaux (où le $\lim^i$ de droite
est calculé dans la catégorie abélienne des faisceaux \fpqc abéliens
et ceux du membre de gauche dans la catégorie des préfaisceaux abéliens). 
En particulier, $(\lim^i_kF_k)\simeq \lim_{k}^i F_k=0$ pour tout $i>1$.
Les
faisceaux d'homotopie non-nuls du champ $\holim_k K(\HH_k,n)$
sont donc ainsi concentrés en degrés $n$ et $n-1$ et sont donnés par
$$\pi_n(\holim_k K(\HH_k,n)) \simeq \HH \qquad
\pi_{n-1}(\holim_k K(\HH_k,n))\simeq {\lim_k}^1(\HH_k).$$
Le morphisme dont il est question dans l'énoncé du 
lemme induit  un isomorphisme
sur les faisceaux $\pi_n$, et il nous reste donc à voir que 
$\lim_k^1\HH_k \simeq 0$.

Soit $\NN^+$ la catégorie des entiers naturels
strictement positifs ordonnés par l'ordre inverse 
de l'ordre usuel. On dispose de trois $\NN^+$-diagrammes
de faisceaux \fpqc abéliens, à savoir 
$k \mapsto \HH_k$, $k\mapsto \WW/\HH_k$ et 
le diagramme constant égal \`a $\WW$. Notons respectivement
par $\HH_*$, $\WW/\HH_*$ et $\WW$ ces trois $\NN^+$-diagrammes, de sorte à 
ce 
qu'ils
soient reliés 
par une suite exacte courte
$$\xymatrix{0 \ar[r] & \HH_* \ar[r] &  \WW \ar[r] & \WW/\HH_* \ar[r] & 0}.$$
La suite exacte longue sur les foncteurs dérivés de la limite
le long de $\NN^+$ nous donne alors une suite exacte de faisceaux abéliens
$$\xymatrix{
\HH=\lim_k \HH_k \ar[r] & \WW \ar[r] & \lim_k (\WW/\HH_k) \ar[r] & \lim_k^1\HH_k \ar[r] & 0.}$$
Rappelons que l'on a une suite exacte courte
$$\xymatrix{
 0\ar[r] & 
\HH_k \ar[r] & \HH_{k-1} \ar^-{Id-F_{p_k}}[r] & \HH_{k-1} \ar[r] & 0,}$$
et donc des isomorphismes $\HH_{k-1}/\HH_k \simeq \HH_{k-1}$. 
De plus, le morphisme $Id-F_{p_k}$ est fidèlement plat.
Ainsi, 
les faisceaux $\HH_{k-1}/\HH_k$ sont tous représentables par des schémas
affines et plats, ce qui implique par récurrence sur $k$ que 
$\WW/\HH_k$ est, lui aussi, représentable par un schéma affine et plat.
Le morphisme
induit sur la limite $\WW \longrightarrow \lim_k(\WW/\HH_k)$
est ainsi un morphisme de schémas en groupes affines et plats sur $\Spec(\ZZ)$. 

Nous appliquons maintenant le lemme \ref{l2} au morphisme
$\WW \longrightarrow \lim_k(\WW/\HH_k)$. Ce morphisme induit 
au-dessus de chaque corps un morphisme fidèlement plat 
de schémas en groupes. En effet, au-dessus d'un corps $k$ le
schéma en groupes $\WW$ se décompose en un produit infini
qui se décrit suivant la caractéristique de $k$ (voir
le paragraphe avant le corollaire \ref{cp1}). 
Il est aisé de voir, à l'aide de ces deux descriptions, que, pour tout corps $k$, le morphisme
de schémas en groupes
$$\WW \times \Spec(k) \longrightarrow \lim_k(\WW/\HH_k) \times \Spec(k)$$
est un épimorphisme (pour la topologie \fpqc) de schémas en groupes affines, 
et donc est un morphisme fidèlement plat de schémas. Le lemme \ref{l2}
implique ainsi que $\WW \longrightarrow \lim_k(\WW/\HH_k)$
est un morphisme fidèlement plat de schémas en groupes affines sur $\Spec(\ZZ)$, 
et donc un épimorphisme de faisceaux \fpqc. Ainsi, nous avons un isomorphisme de faisceaux
$\WW/\HH \simeq \lim_k(\WW/\HH_k)$, ce qui implique que $\lim^1_k(\HH_k)=0$ comme
convenu.
\end{proof}

Nous revenons à la preuve de la proposition \ref{p2}. 
Nous avons vu
l'existence d'une suite de fibrations
de champs affines
$$K(\HH_k,n) \longrightarrow K(\HH_{k-1},n) \longrightarrow K(\HH_{k-1},n).$$
Comme les champs affines sont stables par limites homotopiques (voir 
\cite{to}) on voit
par récurrence sur $k$, que $K(\HH_k,n)$ est un champ
affine pour tout $k$. Enfin, 
$K(\HH,n)$ étant la limite homotopique des $K(\HH_k,n)$, on en déduit
que $K(\HH,n)$ est un champ affine à l'aide de \cite[prop.~2.2.7]{to}, ce qui achève la preuve de la proposition \ref{p2}.
\end{proof}

La proposition \ref{p2} possède la conséquence importante suivante, 
qui répond à la conjecture \cite[conj.~2.3.6]{to}. Notons
que $\WW$ reçoit un unique morphisme additif
$\ZZ \longrightarrow \WW$ donné par l'unité dans l'anneau $\WW(\ZZ)$. Cette
unité appartient à $\HH$ et on a donc ainsi un morphisme 
canonique de faisceaux en groupes $\ZZ \longrightarrow \HH$. Par
construction, ce morphisme envoie un entier $n$ sur la série
$(1-T)^n \in \HH(\ZZ)$ et fournit un isomorphisme sur les sections globales
$\ZZ \simeq \HH(\ZZ)$.

\begin{cor}\label{c1}
Soit $n\geq 1$.
Le morphisme canonique $\mathbb{Z} \longrightarrow \HH$
induit un morphisme de champs
$$K(\ZZ,n) \longrightarrow K(\HH,n)$$
qui fait de $K(\HH,n)$ l'affinisation de $K(\ZZ,n)$ 
au-dessus de $\Spec(\ZZ)$. En d'autres termes, le morphisme
induit en cohomologie
$H^*(K(\HH,n),\OO) \longrightarrow H^*(K(\ZZ,n),\ZZ)$
est un isomorphisme.
\end{cor}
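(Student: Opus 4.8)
Le champ $K(\HH,n)$ étant affine d'après la proposition \ref{p2}, la proposition \ref{prapp} appliquée au morphisme $K(\ZZ,n)\to K(\HH,n)$ déduit de $\ZZ\to\HH$ ramène l'énoncé à la seule assertion que le morphisme
$$H^*(K(\HH,n),\OO) \longrightarrow H^*(K(\ZZ,n),\ZZ)$$
est bijectif. Le plan est de calculer le membre de gauche à l'aide de la tour $\HH=\lim_k\HH_k$ introduite dans la preuve de la proposition \ref{p2}. En effet, le lemme \ref{l1} fournit une équivalence $K(\HH,n)\simeq\holim_k K(\HH_k,n)$, avec $\lim^1_k\HH_k=0$, de sorte que l'on calculerait $H^*(K(\HH,n),\OO)$ à partir des $H^*(K(\HH_k,n),\OO)$, l'annulation des $\lim^1$ garantissant la bonne convergence de la tour.

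Pour calculer ces derniers, on partirait de $K(\WW,n)=K(\HH_1,n)$ et l'on gravirait la tour au moyen des suites exactes courtes
$$\xymatrix{0 \ar[r] & \HH_k \ar[r] & \HH_{k-1} \ar^-{Id-F_{p_k}}[r] & \HH_{k-1} \ar[r] & 0,}$$
établies dans la preuve de la proposition \ref{p2}. Chacune induit une fibration de champs affines $K(\HH_k,n)\to K(\HH_{k-1},n)\to K(\HH_{k-1},n)$, le second morphisme étant induit par $Id-F_{p_k}$, et donc une suite exacte longue (de type Wang) en cohomologie à coefficients dans $\OO$. Le cœur technique consisterait à déterminer l'action de $Id-F_{p_k}$ sur $H^*(K(\HH_{k-1},n),\OO)$, puis à passer à la limite sur $k$.

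Il resterait alors à identifier l'aboutissement avec $H^*(K(\ZZ,n),\ZZ)$. Pour cela on contrôlerait la tour fibre à fibre. Au-dessus de $\QQ$, où $\HH\otimes\QQ\simeq\mathbb{G}_a$, le cas rationnel de l'affinisation identifie la cohomologie de $K(\HH,n)\otimes\QQ$ à $H^*(K(\ZZ,n),\QQ)$ (rappel $(4)$ pour $n\geq 2$, calcul direct de $K(\mathbb{G}_a,n)$ pour $n=1$). Au-dessus d'un corps, ou plus généralement au-dessus de $\ZZ/p^k$, de caractéristique $p$, le corollaire \ref{cp1} identifie $\HH$ au faisceau constant $\hat\ZZ_p$; le champ $K(\HH,n)$ y devient le champ constant $K(\hat\ZZ_p,n)$, dont la cohomologie modulo $p$ coïncide avec celle de $K(\ZZ,n)$ par invariance de la cohomologie modulo $p$ par complétion $p$-adique (rappel $(5)$). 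L'assemblage de ces informations locales en l'énoncé entier se ferait par platitude fidèle (lemme \ref{l2}) et descente fidèlement plate, comme dans la preuve de la proposition \ref{p2}; au niveau des cochaînes, le morphisme étudié n'est autre que la restriction des polynômes à valeurs entières aux points entiers, ce qui relie notre calcul aux cocycles numériques de \cite{ek}.

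L'obstacle principal serait ce dernier assemblage global sur $\Spec(\ZZ)$: il faut maîtriser simultanément la contribution rationnelle et les contributions $p$-adiques à la cohomologie entière à coefficients dans $\OO$, ainsi que la convergence de la tour (annulation des $\lim^1$) et la détermination effective de l'action des $Id-F_{p_k}$. Le cas $n=1$, pour lequel $K(\ZZ,1)=S^1$ n'est pas simplement connexe et échappe donc aux rappels $(4)$ et $(5)$, reproduit la stratégie du \emph{cercle filtré} de \cite{mrt}, mais son traitement global sur $\ZZ$ — plutôt que $p$-local — requiert précisément l'étude fine de la tour $\HH_k$ esquissée ci-dessus.
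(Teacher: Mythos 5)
Votre première réduction (via les propositions \ref{p2} et \ref{prapp}) est exactement celle du papier, mais la suite de votre plan s'en écarte et laisse non démontrées précisément les deux étapes qui portent toute la difficulté. D'abord, votre calcul de $H^*(K(\HH,n),\OO)$ en gravissant la tour des $\HH_k$ — déterminer l'action de $Id-F_{p_k}$ sur $H^*(K(\HH_{k-1},n),\OO)$ en partant de $K(\WW,n)$, puis passer à la limite — est exactement ce que vous différez comme \og le cœur technique \fg; rien dans votre esquisse ne rend ce calcul accessible, et le papier ne le tente jamais: la tour des $\HH_k$ n'y sert qu'à établir l'affinité de $K(\HH,n)$ (proposition \ref{p2}), pas à calculer sa cohomologie. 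Notez de plus que l'annulation de $\lim^1$ fournie par le lemme \ref{l1} porte sur les faisceaux d'homotopie $\HH_k$, et non sur les groupes $H^{*}(K(\HH_k,n),\OO)$: la \og bonne convergence de la tour \fg{} en cohomologie est donc une affirmation supplémentaire, elle aussi non justifiée. Ensuite, et plus gravement, votre assemblage \og fibre à fibre puis descente fidèlement plate (lemme \ref{l2}) \fg{} n'est pas une inférence valide: la famille $\{\Spec(\QQ),\Spec(\FF_p)\}$ n'est pas un recouvrement \fpqc de $\Spec(\ZZ)$ (les $\FF_p$ ne sont pas plats sur $\ZZ$), et le lemme \ref{l2} concerne la platitude de morphismes de schémas affines, pas la cohomologie des champs. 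Ce dont vous avez réellement besoin, ce sont deux énoncés distincts: (i) la compatibilité au changement de base $C^*(K(\HH,n),\OO)\otimes^{\mathbb{L}}k \simeq C^*(K(\HH\times\Spec(k),n),\OO)$ pour $k=\QQ,\FF_p$ — une commutation d'une totalisation avec $-\otimes k$, fausse en général pour un champ quelconque, et qui est le véritable nœud du problème — et (ii) la conservativité de la famille $-\otimes^{\mathbb{L}}\QQ$, $-\otimes^{\mathbb{L}}\FF_p$ sur les complexes de groupes abéliens (un argument de fracture arithmétique, qui n'est pas de la descente). Vous ne traitez ni l'un ni l'autre.

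Le papier contourne ces deux obstacles par une structure de preuve différente. Pour $n=1$, il utilise que $K(\HH,1)$ est le quotient de $\Spec(\ZZ)$ par $\HH$, de sorte que $C^*(K(\HH,1),\OO)$ est représenté par le complexe de Hochschild explicite $\ZZ \to B \to B^{\otimes 2} \to \cdots$ de l'algèbre de Hopf $B=\OO(\HH)$ (\cite[\S III]{dg}); la platitude degré par degré de $B$ sur $\ZZ$ rend alors le point (i) immédiat, et l'identification fibre à fibre se conclut par les résultats de \cite{to} affirmant que $K(\ZZ,1)\to K(\mathbb{G}_a,1)$ et $K(\ZZ,1)\to K(\hat{\ZZ}_p,1)$ sont des affinisations sur $\QQ$ et $\FF_p$ — résultats spécifiques à $K(\ZZ,1)$ qui ne requièrent pas la simple connexité, ce qui règle aussi l'inquiétude que vous soulevez à la fin (votre renvoi aux rappels $(4)$–$(5)$ serait effectivement illicite ici). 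Pour $n>1$, le papier ne démontre aucune compatibilité au changement de base: il écrit $C^*(K(\ZZ,n),\ZZ)$ et $C^*(K(\HH,n),\OO)$ comme totalisations $\holim_{k\in\Delta}$ des puissances tensorielles des algèbres de cochaînes en degré $n-1$ (construction bar/Eilenberg--Moore), observe que le morphisme de comparaison respecte ces décompositions, et conclut par récurrence sur $n$. Pour sauver votre plan, il faudrait remplacer l'étape de descente soit par une preuve du point (i) adaptée à votre calcul par la tour, soit par cet argument inductif de construction bar — auquel cas la tour des $\HH_k$ devient superflue.
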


\begin{proof}
Nous savons déjà que 
$K(\HH,n)$ est un champ affine, et il nous faut donc montrer que le morphisme
induit sur les complexes de cohomologie
$$C^*(K(\ZZ,n),\ZZ) \longrightarrow C^*(K(\HH,n),\OO)$$
est un quasi-isomorphisme (voir la proposition \ref{prapp}).

Commençons par le cas $n=1$. Dans ce cas, la formation du complexe
$C^*(K(\HH,1),\OO)$ est compatible aux changements de bases le long
des morphismes $\ZZ \longrightarrow \FF_p$ et $\ZZ \longrightarrow \QQ$. 
En effet, comme $K(\HH,1)$ est le quotient de $\Spec(\ZZ)$ par 
$\HH$, le complexe $C^*(K(\HH,1),\OO)$ peut se représenter comme le 
complexe de cohomologie de Hochschild
$$\xymatrix{\ZZ \ar[r] &  B \ar[r] & B^{\otimes 2} \ar[r] & \dots \ar[r]& B^{\otimes n} \ar[r] & B^{\otimes n+1} \ar[r] & \dots,}$$
où $B=\OO(\HH)$ est l'algèbre de Hopf des polynômes à valeurs entières, et où la différentielle
est induite par la comultiplication $B \longrightarrow B^{\otimes 2}$ 
(voir \cite[\S III]{dg}). Cette présentation, et 
la platitude de $B$ sur $\ZZ$, impliquent que les morphismes induits
$$C^*(K(\HH,1),\OO) \otimes \QQ \longrightarrow C^*(K(\HH_\QQ,1),\OO)$$
$$C^*(K(\HH,1),\OO) \otimes \FF_p \longrightarrow C^*(K(\HH_{\FF_p},1),\OO)$$
sont tous deux des quasi-isomorphismes (où l'on a noté
$\HH_k:=\HH\times \Spec(k)$ pour un corps $k$). 
Ainsi, pour vérifier que le morphisme $C^*(K(\ZZ,1),\ZZ) \longrightarrow C^*(K(\HH,1),\OO)$
est un quasi-isomorphisme, il suffit de montrer l'énoncé analogue pour les
champs $K(\HH_\QQ,1)$ et $K(\HH_{\FF_p},1)$. Or, 
ces champs sont respectivement $K(\mathbb{G}_{a},1)$ et $K(\hat{\ZZ}_p,1)$. Ainsi, le corollaire
pour $n=1$ se déduit du fait que l'on sait déjà que les morphismes naturels
$$K(\ZZ,1) \longrightarrow K(\mathbb{G}_a,1) \qquad K(\ZZ,1) \longrightarrow K(\hat{\ZZ}_p,1)$$
sont des affinisations au-dessus de $\QQ$ et des corps finis 
$\FF_p$ respectivement (voir \cite{to}). 

Enfin, le cas $n>1$ se réduit au cas $n=1$ de la manière suivante. On remarque que
$K(\HH,n)$ et $K(\ZZ,n)$ sont tous deux les classifiants des champs
en groupes $K(\HH,n-1)$ et $K(\ZZ,n-1)$. Ainsi, 
les complexes de cohomologie $C^*(K(\ZZ,n),\ZZ)$ et 
$C^*(K(\HH,n),\OO)$ s'écrivent comme des limites homotopiques
$$C^*(K(\ZZ,n),\ZZ) \simeq \holim_{k\in \Delta}C^*(K(\ZZ,n-1),\ZZ)^{\otimes k}$$
$$C^*(K(\HH,n),\OO) \simeq \holim_{k\in \Delta}C^*(K(\HH,n-1),\OO)^{\otimes k}.$$
Le morphisme $C^*(K(\ZZ,n),\ZZ) \longrightarrow C^*(K(\HH,n),\OO)$ étant compatible 
à ces décompositions, nous concluons par récurrence sur $n$.
\end{proof}

Le corollaire \ref{c1} possède la conséquence intéressante suivante.

\begin{cor}\label{c1'}
Il existe un isomorphisme d'algèbres de Hopf
$$\OO(\HH) \simeq \mathbb{Z}\otimes^{\mathbb{L}}_{C^*(S^1,\ZZ)}\ZZ$$
$($en particulier le membre de droite est cohomologiquement concentré en 
degré $0)$.
\end{cor}

\begin{proof}
En effet, on a un 
carré homotopiquement cartésien de champs affines
$$\xymatrix{
\HH \ar[r] \ar[d] & \Spec(\ZZ) \ar[d] \\
\Spec(\ZZ) \ar[r] & K(\HH,1).}$$
En en prenant les algèbres cosimpliciales de cohomologie, et
en utilisant le corollaire \ref{c1} on trouve
un carré homotopiquement cocartésien d'algèbres cosimpliciales commutatives
$$\xymatrix{
\OO(\HH)  & \ar[l] \ZZ  \\
\ZZ \ar[u] & C^*(S^1,\ZZ) \ar[u] \ar[l],}$$
ce qu'il fallait démontrer.
\end{proof}

Il est possible de poursuivre dans la direction du corollaire \ref{c1'} pour obtenir une interprétation
tannakienne du schéma en groupes $\HH$. Pour cela on considère
$QCoh(S^1)$ l'$\s$-catégorie des complexes de faisceaux abéliens
sur le cercle $S^1$ dont les faisceaux de cohomologie sont localement constants.
C'est une $\s$-catégorie $\ZZ$-linéaire et symétrique monoïdale. On considère
la sous-$\s$-catégorie symétrique monoïdale $QCoh^{\uni}(S^1)$
engendrée par colimites et décalage par l'unité $\OO$. Cette $\s$-catégorie
peut aussi s'écrire de la forme $\D(A)$, l'$\s$-catégorie
des complexe de $A$-dg-module, où $A=C^*(S^1,\ZZ)$ vue comme
$E_\s$-algèbre. Le foncteur fibre $QCoh(S^1) \longrightarrow QCoh(\Spec(\ZZ))=\D(\ZZ)$
induit un $\s$-foncteur $\ZZ$-linéaire et symétrique monoïdal
$$\omega : QCoh^{\uni}(S^1) \longrightarrow \D(\ZZ).$$
Il s'agit, à l'aide de l'identification $QCoh^{\uni}(S^1)\simeq \D(A)$, du changement
de bases le long de l'augmentation naturelle $A \longrightarrow \ZZ$. 

Pour tout anneau commutatif $R$, on dispose de la composition
$$\omega \otimes R : QCoh^{\uni}(S^1) \longrightarrow \D(\ZZ) \longrightarrow
\D(R),$$
qui est encore un $\s$-foncteur $\ZZ$-linéaire et symétrique monoïdal. On dispose
d'une suite exacte d'espaces de morphismes
$$\xymatrix{
\Map_{QCoh^{\uni}(S^1)}(\D(\ZZ),\D(R)) \ar[r] & 
\Map_{\D(\ZZ)}(\D(\ZZ),\D(R)) \ar[r] & \Map_{\D(\ZZ)}(QCoh^{\uni}(S^1),\D(R))}$$
où la fibre est prise au point $\omega\otimes R$, et 
$\Map_{\mathcal{C}}$ désigne ici l'espace des $\s$-foncteurs
continus symétriques monoïdaux au-dessus d'une $\s$-catégorie
symétrique monoïdale fixée $\mathcal{C}$. Comme le terme du milieu
est clairement contractile, nous trouvons une équivalence fonctorielle
en $R$
$$\mathrm{Aut}^{\otimes}(\omega\otimes R) \simeq \Map_{QCoh^{\uni}(S^1)}(\D(\ZZ),\D(R)).$$
Par ailleurs, l'$\s$-foncteur 
$A \mapsto \D(A)$, depuis les $E_\s$-algèbres vers les
$\s$-catégories monoïdales symétriques présentables, est pleinement fidèle
(voir \cite{HA}). Ainsi, le membre de droite de l'équivalence ci-dessus
se décrit par
$$\Map_{QCoh^{\uni}(S^1)}(\D(\ZZ),\D(R)) \simeq \Map(\ZZ\otimes_{A}^{\mathbb{L}}\ZZ,R)
\simeq \HH(R).$$
En somme, nous avons montré que le foncteur $R \mapsto \mathrm{Aut}^{\otimes}(\omega\otimes R)$, des auto-équivalences symétriques monoïdales de $\omega$, est représentable
par le schéma en groupes $\HH$. Un fait que nous pouvons résumer par: 
\emph{le schéma en groupes $\HH$ est le dual de Tannaka des systèmes locaux unipotents
sur $S^1$}. 

\section{Faisceaux d'homotopie de l'affinisation}

On rappelle qu'un espace (connexe et) simplement connexe $X \in Top$ est  \emph{de type fini}
si les groupes d'homotopie $\pi_i(X)$ sont tous de type fini. L'énoncé suivant est le théorème
central de cet article. 

\begin{thm}\label{p3}
Soit $X$ un espace simplement connexe et de type fini. Alors il existe, 
pour tout $i$, des isomorphismes naturels de faisceaux en groupes
$$\pi_i(X)\otimes_\ZZ \HH \simeq \pi_i((X\otimes \ZZ)^{\uni}).$$
\end{thm}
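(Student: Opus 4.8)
The plan is to climb the Postnikov tower of $X$, taking Corollaire~\ref{c1} as the base case and the flatness of $\HH$ as the engine that propagates the formula through each fibration principale. The whole argument rests on one preservation statement: that l'affinisation transforms the Postnikov pullback squares into pullback squares of champs affines. Recall that for $X$ simply connected of type fini each troncation $X_n$ fits in a carré homotopiquement cartésien with corners $X_n$, $*$, $X_{n-1}$ and $K(\pi_n(X),n+1)$, classified by le $k$-invariant. Applying $C^*(-,\ZZ)$ I would establish an équivalence d'Eilenberg--Moore of anneaux cosimpliciaux commutatifs
$$C^*(X_n,\ZZ) \simeq C^*(X_{n-1},\ZZ)\otimes^{\mathbb{L}}_{C^*(K(\pi_n(X),n+1),\ZZ)}\ZZ,$$
which is legitimate because the base $K(\pi_n(X),n+1)$ is simply connected of type fini. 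Since $\mathbb{R}\Spec$ sends homotopy pushouts of anneaux cosimpliciaux to homotopy pullbacks of champs affines, applying it exhibits $(X_n\otimes\ZZ)^{\uni}$ as the fibre homotopique of a morphism $(X_{n-1}\otimes\ZZ)^{\uni}\to (K(\pi_n(X),n+1)\otimes\ZZ)^{\uni}$.

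The second ingredient is the affinisation of the champs d'Eilenberg--MacLane $K(A,n)$ for $A$ de type fini. For $A=\ZZ$ this is exactly Corollaire~\ref{c1}, giving $K(\HH,n)$. For $A=\ZZ/m$ I would affiniser the fibration $K(\ZZ/m,n)\to K(\ZZ,n+1)\xrightarrow{m}K(\ZZ,n+1)$: the preservation statement identifies $(K(\ZZ/m,n)\otimes\ZZ)^{\uni}$ with the fibre homotopique of $m\colon K(\HH,n+1)\to K(\HH,n+1)$, whose only possibly nonzero faisceaux d'homotopie are $\HH[m]$ in degree $n+1$ and $\HH/m\HH$ in degree $n$. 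Now $\HH$ is flat over $\ZZ$: its fibres over every corps are $\mathbb{G}_a$ or $\hat{\ZZ}_p$ by Corollaire~\ref{cp1}, hence sans torsion, so $m$ is a monomorphisme by Lemme~\ref{l2} and $\HH[m]=0$. Thus $(K(\ZZ/m,n)\otimes\ZZ)^{\uni}\simeq K(\ZZ/m\otimes_\ZZ\HH,n)$. Because affinisation carries finite products to products (Künneth for $C^*$ together with the fact that $\mathbb{R}\Spec$ turns tensor products into products), decomposing $A$ into facteurs cycliques gives $(K(A,n)\otimes\ZZ)^{\uni}\simeq K(A\otimes_\ZZ\HH,n)$ for every $A$ de type fini.

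With these two facts the récurrence runs cleanly. Assuming the formula for $X_{n-1}$, the preserved square presents $(X_n\otimes\ZZ)^{\uni}$ as the fibre homotopique of $(X_{n-1}\otimes\ZZ)^{\uni}\to K(\pi_n(X)\otimes_\ZZ\HH,n+1)$. The flatness of $\HH$ makes $-\otimes_\ZZ\HH$ exact on faisceaux, so the suite exacte longue of faisceaux d'homotopie of this fibration is obtained from that of $X_n\to X_{n-1}\to K(\pi_n(X),n+1)$ simply by applying $\otimes_\ZZ\HH$; comparing terme à terme yields $\pi_i((X_n\otimes\ZZ)^{\uni})\simeq\pi_i(X_n)\otimes_\ZZ\HH$ for all $i$. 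Finally, since each $\pi_i(X)$ is already $\pi_i(X_N)$ for $N\geq i$ and the troncation $X\to X_N$ is highly connected, I would check that its affinisation induces an isomorphisme on $\pi_i$ in this range, so that passing to the limit gives the isomorphismes naturels $\pi_i(X)\otimes_\ZZ\HH\simeq\pi_i((X\otimes\ZZ)^{\uni})$.

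The delicate point, which I expect to absorb most of the work, is the preservation statement of the first paragraph, i.e. the équivalence d'Eilenberg--Moore entière for $C^*(-,\ZZ)$ in anneaux cosimpliciaux commutatifs. Over a corps this is classique, but over $\ZZ$ in the presence of torsion the strong form and the convergence of the comparison demand genuine care; pinning it down rigorously --- or extracting it from the formalisme des champs affines of \cite{to} --- is the real content, everything else being a conséquence formelle of flatness and of the Eilenberg--MacLane computation.
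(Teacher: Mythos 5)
Your outline follows the paper's own proof almost step for step: first the Eilenberg--MacLane case ($\pi=\ZZ$ from corollaire \ref{c1}, $\pi=\ZZ/m$ by affinising the fibre sequence $K(\ZZ/m,n)\to K(\ZZ,n+1)\xrightarrow{\times m}K(\ZZ,n+1)$, general $\pi$ by K\"unneth), then Postnikov induction through an integral Eilenberg--Moore theorem (this is exactly the paper's théorème \ref{ta1}, proved in l'appendice \ref{appA}), then passage to the limit. There are, however, two places where what you wrote does not hold up. The first is your justification that $\HH[m]=0$: you pass from torsion-freeness of the fibres $\mathbb{G}_a$ and $\hat{\ZZ}_p$ to the assertion that $\times m$ is a monomorphism \emph{by lemme \ref{l2}}. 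But lemme \ref{l2} deduces (faithful) flatness of a morphism from fibrewise (faithful) flatness; it says nothing about monomorphisms, and in general triviality of a kernel on field-valued points does not imply triviality of that kernel as a group scheme or as an \fpqc sheaf (the kernel of Frobenius $\alpha_p=\Spec(\FF_p[x]/x^p)$ has only trivial points over every field). What is actually needed --- both here and later when you claim that $-\otimes_\ZZ\HH$ is exact on sheaves --- is that $\HH(R)$ is torsion-free for \emph{every} commutative ring $R$, not just for fields; the paper obtains this from the identification of $\HH(R)=\WW(R)^F$ with (the additive group of) a binomial ring in the sense of \cite{ell}, binomial rings being torsion-free. (Your route can be repaired: $\times m$ is fibrewise flat --- an isomorphism in characteristic zero, an isomorphism onto a clopen subgroup in characteristic $p$ --- hence flat by lemme \ref{l2}, hence $\HH[m]$ is flat over $\ZZ$ with trivial fibres, hence trivial; but that is a different argument from the one you gave.)

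The second gap is the passage to the limit, which you compress into one sentence. After establishing the theorem for each truncation $X_N$, one needs $(X\otimes\ZZ)^{\uni}\simeq\holim_N(X_N\otimes\ZZ)^{\uni}$ (via $C^*(X,\ZZ)\simeq\colim_N C^*(X_N,\ZZ)$), and then one must compute the homotopy sheaves of this countable homotopy limit. This produces Milnor-type $\lim^1$-sequences of \emph{presheaves}, and identifying their sheafifications requires knowing that \fpqc sheafification commutes with countable limits and with $\lim^1$ --- a genuinely non-formal property of the \fpqc topos, to which the paper devotes l'appendice \ref{appB} (proposition \ref{pa}, corollaire \ref{ca}); it is precisely what makes the $\lim^1$ term vanish and the map $(X\otimes\ZZ)^{\uni}\to(X_N\otimes\ZZ)^{\uni}$ an isomorphism on $\pi_i$ for $i\leq N$. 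Once these two points are supplied, the rest of your argument --- including your correct identification of the integral Eilenberg--Moore theorem as the heaviest ingredient --- coincides with the paper's proof.
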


\begin{proof}
Nous allons construire les isomorphismes
$\pi_i(X)\otimes_\ZZ \HH \simeq \pi_i((X\otimes \ZZ)^{\uni})$
au cours de la preuve. Nous procèderons par une récurrence sur la tour
de Postnikov de $X$
$$\xymatrix{X \ar[r] & \dots \ar[r] & X_n \ar[r] & X_{n-1} \ar[r] & \dots & X_{2} \ar[r] & X_1=*,}$$
et notons $\pi_i=\pi_i(X)$. 
Nous commençons ainsi par traiter le cas des espaces d'Eilenberg-MacLane. Notons que
le théorème implique en particulier, que si $X$ est de plus $n$-tronqué (i.e. $\pi_i(X)=0$ pour $i>n$), 
alors le champ $(X\otimes \ZZ)^{\uni}$ est lui-même $n$-tronqué.

\begin{lem}\label{l3}
Soit $\pi$ un groupe abélien de type fini. Alors, pour tout $n>1$, 
on a une équivalence naturelle de champs
$$(K(\pi,n)\otimes \mathbb{Z})^{\uni} \simeq K(\pi\otimes \HH,n).$$
Cette équivalence est de sorte à ce que le morphisme d'adjonction
$K(\pi,n) \to (K(\pi,n)\otimes \mathbb{Z})^{\uni}$ soit de plus induit
par le morphisme naturel de faisceaux $\pi \to \pi\otimes \HH$.
\end{lem}

\begin{proof}
Lorsque $\pi=\ZZ$, ce lemme est la proposition \ref{p2}. 
Le cas $\pi=\ZZ^m$ s'en déduit aisément par K\"{u}nneth
$$(K(\ZZ,n)^m\otimes \mathbb{Z})^{\uni} \simeq \mathbb{R}\Spec(C^*(K(\ZZ,n),\ZZ)^{\otimes m}) \simeq 
\mathbb{R}\Spec(C^*(K(\ZZ,n),\ZZ))^{\times m}\simeq K(\pi\otimes \HH,n).$$
Dans le cas général, toujours par l'argument de K\"{u}nneth, il nous
reste à traiter le cas $\pi=\ZZ/m$ d'un groupe cyclique. Dans ce cas, on dispose d'un carré homotopiquement cartésien
$$\xymatrix{
K(\pi,n) \ar[r] \ar[d] & \bullet \ar[d] \\
K(\ZZ,n+1) \ar[r]_-{\times m} & K(\ZZ,n+1).}$$
Nous sommes dans les conditions d'applications du théorème d'Eilenberg-Moore 
rappelé dans l'appendice \ref{appA}, et ainsi
le carré précédent induit un carré homotopiquement cocartésien d'algèbres cosimpliciales commutatives
$$\xymatrix{
C^*(K(\ZZ,n+1),\ZZ) \ar[r] \ar[d] & C^*(K(\ZZ,n+1),\ZZ) \ar[d] \\
\ZZ \ar[r] & C^*(K(\pi,n),\ZZ).}$$
En en prenant les spectres correspondants, et en utilisant le cas déjà connu de $\pi=\ZZ$, on trouve un carré homotopiquement cartésien
de champs affines
$$\xymatrix{
(K(\pi,n)\otimes \ZZ)^{\uni} \ar[r] \ar[d] & \bullet \ar[d] \\
K(\HH,n+1) \ar[r]_-{\times m} & K(\HH,n+1).}$$
La suite exacte longue en homotopie et le fait que $\HH(R)$ soit un groupe
sans torsion (car sous-groupe de $\WW(R)$) impliquent que l'on a un isomorphisme naturel
$$\pi_{n}((K(\pi,n)\otimes \ZZ)^{\uni}) \simeq \pi \otimes \HH.$$
Ceci termine la preuve du lemme.
\end{proof} 

Le lemme implique l'énoncé du théorème pour des espaces tronqués $X$, 
par une récurrence simple le long de la tour 
de Postnikov de $X$. En effet, on a pour tout $k\geq 3$ un carré homotopiquement cartésien
$$\xymatrix{
X_k \ar[r] \ar[d] & \bullet \ar[d] \\
X_{k-1} \ar[r] & K(\pi_k,k+1),}$$
avec $\pi_k$ un groupe abélien de type fini. Le théorème de Eilenberg-Moore 
\ref{ta1} s'applique, et 
donne comme précédemment un carré homotopiquement cocartésien 
sur les algèbres cosimpliciales de cohomologie. Par le lemme précédent, le diagramme induit sur leurs spectres
est alors un carré homotopiquement cartésien de champs affines
$$\xymatrix{
(X_k \otimes \ZZ)^{\uni} \ar[r] \ar[d] & \bullet \ar[d] \\
(X_{k-1} \otimes \ZZ)^{\uni} \ar[r] & K(\pi_k \otimes \HH,k+1).}$$
La proposition s'ensuit par récurrence et la suite exacte longue en homotopie. 

Enfin, pour un espace simplement connexe de type fini $X$ général,
on écrit $X=\holim_n X_n$ comme limite homotopique de ses tronqués.
En particulier, on a $C^*(X,\ZZ) \simeq \colim_n C^*(X_,\ZZ)$, 
et on en déduit donc $(X\otimes \ZZ)^{\uni} \simeq \holim_n (X_n\otimes \ZZ)^{\uni}$.
Nous connaissons déjà la proposition pour chacun des $X_n$, et pour conclure
il nous suffit donc de montrer que le morphisme naturel
$$(X\otimes \ZZ)^{\uni} \longrightarrow  (X_n\otimes \ZZ)^{\uni}$$
induit des isomorphismes sur les faisceaux d'homotopie $\pi_i$ dès que
$i\leq n$. Or, les préfaisceaux d'homotopie du champ 
$(X\otimes \ZZ)^{\uni}$ entrent dans des suites exactes courtes
$$\xymatrix{ 0 \ar[r] & 
\pi_i^{pr}((X\otimes \ZZ)^{\uni}) \ar[r] 
& \lim_n \pi_i^{pr}((X_n\otimes \ZZ)^{\uni}) \ar[r]
& \lim^1_n \pi_{i+1}^{pr}((X_n\otimes \ZZ)^{\uni}) \ar[r] & 0.}$$
Comme le foncteur faisceau \fpqc associé commute aux $\lim_n$ et $\lim_n^1$ 
(voir corollaire \ref{ca}), on trouve des suites exactes de faisceaux
$$\xymatrix{ 0 \ar[r] & 
\pi_i((X\otimes \ZZ)^{\uni}) \ar[r] 
& \lim_n \pi_i((X_n\otimes \ZZ)^{\uni}) \ar[r]
& \lim^1_n \pi_{i+1}((X_n\otimes \ZZ)^{\uni}) \ar[r] & 0.}$$
Or, le système $n \mapsto \pi_{i+1}((X_n\otimes \ZZ)^{\uni})$ est constant,
de valeurs $\pi_{i+1}(X)\otimes \HH$,
dès que $n\geq i$, et ainsi le terme $\lim^1$ s'annule à l'aide
d'une nouvelle application du corollaire \ref{ca}. On a donc
$$\pi_i((X\otimes \ZZ)^{\uni}) \simeq \lim_n \pi_i((X_n\otimes \ZZ)^{\uni})
\simeq \pi_i((X_n\otimes \ZZ)^{\uni})$$
dès que $n\geq i$. 
\end{proof}

\begin{rmk}\label{rp3}Les isomorphismes
$\pi_i(X)\otimes_\ZZ \HH \simeq \pi_i((X\otimes \ZZ)^{\uni})$ 
du théorème \ref{p3} sont, par observation, fonctoriels en $X$. En effet, 
ils sont construits de la manière suivante. On dispose, pour $i\geq 0$
fixé, d'un diagramme 
d'espaces
$$\xymatrix{X \ar[r] & X_i & K(\pi_i,i) \ar[l]}$$
où $X_i$ est le $i$-ème tronqué de Postnikov de $X$. On en déduit alors
d'un diagramme de faisceaux abéliens
$$\xymatrix{\pi_i((X\otimes \ZZ)^{\uni}) 
\ar[r]^-{c} & \pi_i((X_i\otimes \ZZ)^{\uni}) & \pi_i((K(\pi_i,i)\otimes \ZZ)^{\uni}) \ar[l]_-{b} \ar[r]^-{a} & \pi_i(K(\pi_i \otimes\HH,i))\simeq
\pi_i\otimes \HH.}$$
Les morphismes $c$ et $b$ sont ici induits par fonctorialité
à partir du diagramme d'espaces ci-dessus. Le morphisme
$a$ est quant à lui construit à partir du morphisme
naturel $K(\pi_i,i) \to K(\pi_i \otimes \HH,i)$,
en observant que le lemme \ref{l3} montre en particulier que
$K(\pi_i \otimes \HH,i)$ est un champ affine. Par propriété 
universelle de l'affinisation le morphisme $K(\pi_i,i) \to K(\pi_i \otimes \HH,i)$ induit ainsi un morphisme 
$(K(\pi_i,i)\otimes \ZZ)^{\uni} \to K(\pi_i \otimes \HH,i)$.
De plus, le morphisme $a$ est un isomorphisme d'après le lemme \ref{l3}, et par
construction est fonctoriel en le groupe $\pi_i$. Enfin, 
nous avons vu au cours de la preuve du théorème \ref{p3} que les morphismes
$b$ et $c$ sont des isomorphismes. L'isomorphisme du théorème \ref{p3}
est alors donné par $c^{-1}ba^{-1}$.
\end{rmk}

Un important corollaire du théorème \ref{p3} 
est le fait suivant. Remarquons pour cela que 
$\HH$ est le groupe additif d'un faisceau en anneaux
commutatifs, car $\HH(R)$ s'identifie au sous-anneau
de $\WW(R)$ formé des vecteurs de Witt fixés par tous les Frobenius.

\begin{cor}\label{c2}
Soit $X$ un espace simplement connexe et de type fini, alors 
pour tout $i$ le faisceau en groupes
$\pi_i((X\otimes \ZZ)^{\uni})$ possède une structure naturelle
de faisceau en $\HH$-modules.
\end{cor}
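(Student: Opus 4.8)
Le plan est de transporter, à travers l'isomorphisme naturel du théorème \ref{p3}, une structure de $\HH$-module qui existe tautologiquement sur le membre de gauche. Je commencerais par expliciter le fait, signalé dans la remarque qui précède l'énoncé, que $\HH$ est le groupe additif sous-jacent à un faisceau en anneaux commutatifs. En effet, la proposition \ref{p1} identifie $\HH(R)$ au sous-groupe $\WW(R)^F$ des vecteurs de Witt fixés par tous les Frobenius; comme les $F_n$ sont des morphismes d'anneaux, ce sous-groupe est stable par le produit de $\WW(R)$ et contient l'unité, ce qui en fait un sous-anneau de $\WW(R)$, fonctoriellement en $R$. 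Ainsi $\HH$ est muni d'une multiplication faisant de lui un faisceau en anneaux commutatifs dont le groupe additif est celui déjà considéré.

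Ensuite, pour tout groupe abélien $\pi$ de type fini, le faisceau $\pi\otimes_\ZZ \HH$ — associé au préfaisceau $R \mapsto \pi\otimes_\ZZ \HH(R)$ — hérite d'une structure canonique de $\HH$-module. Il s'agit simplement du changement de base le long du morphisme d'anneaux $\ZZ \to \HH$: pour chaque $R$, le groupe $\pi\otimes_\ZZ \HH(R)$ est un $\HH(R)$-module par multiplication sur le second facteur, $h\cdot(a\otimes h') := a\otimes hh'$, construction manifestement fonctorielle en $R$ et donc compatible avec la faisceautisation \fpqc. Il ne reste plus qu'à transporter cette structure via l'isomorphisme $\pi_i(X)\otimes_\ZZ \HH \simeq \pi_i((X\otimes \ZZ)^{\uni})$ du théorème \ref{p3}, appliqué à $\pi = \pi_i(X)$.

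La seule chose à vérifier, et où réside la seule subtilité de la preuve, est la naturalité en $X$ de la structure ainsi obtenue. Celle-ci découle toutefois immédiatement de la fonctorialité de l'isomorphisme du théorème \ref{p3}, établie dans la remarque \ref{rp3}: tout morphisme $f : X \to Y$ d'espaces simplement connexes de type fini induit un carré commutatif reliant les isomorphismes pour $X$ et pour $Y$ au morphisme $\pi_i(f)\otimes \HH$, et ce dernier est $\HH$-linéaire par la construction même du produit tensoriel rappelée ci-dessus. La structure de faisceau en $\HH$-modules sur $\pi_i((X\otimes \ZZ)^{\uni})$ est donc bien naturelle, ce qui achève la démonstration.
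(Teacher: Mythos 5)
Votre preuve est correcte et suit essentiellement la même démarche que l'article : celui-ci ne donne d'ailleurs pas de démonstration séparée, la structure de $\HH$-module étant \emph{par définition} celle transportée via l'isomorphisme du théorème \ref{p3}, après avoir observé (comme vous le faites via la proposition \ref{p1}) que $\HH$ est le groupe additif d'un faisceau en anneaux commutatifs, sous-anneau de $\WW$ fixé par les Frobenius. Votre vérification de la naturalité en $X$ via la remarque \ref{rp3} correspond exactement au point que l'article invoque également.
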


\begin{rmk}
La structure de $\HH$-module du corollaire précédent 
est par définition celle induite par notre théorème \ref{p3}. Malheureusement,
l'isomorphisme $\pi_i(X)\otimes_\ZZ \HH \simeq \pi_i((X\otimes \ZZ)^{\uni})$
de ce théorème est construit au cours de la preuve (voir
remarque \ref{rp3}). Il serait 
intéressant de savoir démontrer le corollaire \ref{c2} de manière 
indépendante, et ce afin que le morphisme naturel 
$\pi_i(X)\to \pi_i((X\otimes \ZZ)^{\uni})$ induise l'isomorphisme
du théorème \ref{p3}. Nous ne savons cependant pas comment 
démontrer que les faisceaux $\pi_i((X\otimes \ZZ)^{\uni})$ possèdent une 
structure naturelle de $\HH$-modules de manière directe.
\end{rmk}

Un second corollaire, important et que nous utiliserons dans la section
suivante, et le phénomène de rigidité suivant. On note
$A=\WW_{p^\s}(k)$ avec $k$ une extension algébrique de $\FF_p$. 
C'est un anneau de valuation 
discrète noethérien et complet
de corps résiduel $k$. On note
$\pi \in A$ une uniformisante de $A$.

\begin{cor}\label{c22}
Soit $X$ un espace simplement connexe et de type fini. Alors, pour
entier $n$, les morphismes
$$\xymatrix{
(X\otimes \ZZ)^{\uni}(A) \ar[r] & (X\otimes \ZZ)^{\uni}(A/\pi^n)
\ar[r] & (X\otimes \ZZ)^{\uni}(k)}$$
sont des équivalences. En d'autres termes
le champ $(X\otimes \ZZ)^{\uni}$, restreint au petit site profini-étale
de $\Spec(\hat{\ZZ}_p)$ est un champ constant de fibre 
$X_p$ le complété p-adique de l'espace $X$.
\end{cor}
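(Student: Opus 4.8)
Le schéma de la preuve repose sur la combinaison de deux faits de natures très différentes: une propriété de \emph{continuité} du foncteur d'évaluation en l'anneau complet $A$, et une propriété de \emph{rigidité} de la tour des troncations $A/\pi^n$. Rappelons que $A=\WW_{p^\s}(k)$ est $\pi$-adiquement complet, c'est-à-dire que $A\simeq\lim_n A/\pi^n$, la tour des troncations ayant des morphismes de transition surjectifs. L'idée directrice est de ne jamais calculer directement la cohomologie de $\HH$ au-dessus de $A$ — ce qui serait délicat, puisque $A$ est de caractéristique mixte et que $\HH_A$ n'est pas constant (sa fibre générique étant $\mathbb{G}_a$) — mais de tout ramener à la fibre spéciale $k$ à l'aide de la complétude.

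Commençons par la rigidité de la tour. D'après le théorème \ref{p3}, les faisceaux d'homotopie de $(X\otimes\ZZ)^{\uni}$ sont les $\pi_i(X)\otimes_\ZZ\HH$. Comme chaque $A/\pi^n$ est une $\ZZ/p^n$-algèbre, le corollaire \ref{cp1} montre que la restriction de $\HH$, et donc de chaque $\pi_i(X)\otimes\HH$, au-dessus de $\Spec(A/\pi^n)$ est un faisceau \emph{constant}. Le foncteur d'évaluation $F\mapsto F(R)$ commutant aux limites homotopiques, on se ramène, via l'écriture $(X\otimes\ZZ)^{\uni}\simeq\holim_m(X_m\otimes\ZZ)^{\uni}$ et les blocs élémentaires $K(\pi_i(X)\otimes\HH,i)$ de la tour de Postnikov, à l'énoncé suivant: pour tout faisceau constant $C$ et tout $i$, les morphismes
$$K(C,i)(A/\pi^{n+1}) \longrightarrow K(C,i)(A/\pi^{n}) \longrightarrow \dots \longrightarrow K(C,i)(k)$$
sont des équivalences. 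Or les groupes d'homotopie $\pi_j(K(C,i)(R))$ s'identifient à la cohomologie \fpqc $H^{i-j}(\Spec R,C)$, et pour un faisceau constant — donc lisse — cette cohomologie coïncide avec la cohomologie étale. Les immersions fermées $\Spec(k)\hookrightarrow\Spec(A/\pi^n)$ étant des épaississements nilpotents, l'invariance topologique du petit site étale assure que ces groupes ne dépendent pas de $n$. Tous les morphismes de transition de la tour $n\mapsto(X\otimes\ZZ)^{\uni}(A/\pi^n)$, ainsi que les morphismes vers $(X\otimes\ZZ)^{\uni}(k)$, sont donc des équivalences.

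Passons à la continuité. L'affinisation étant donnée par $(X\otimes\ZZ)^{\uni}(R)=\Map(C^*(X,\ZZ),R)$, et l'espace de morphismes commutant aux limites homotopiques dans la variable $R$, l'identification $A\simeq\holim_n A/\pi^n$ (la tour des troncations, à morphismes surjectifs, est de Mittag-Leffler et sans $\lim^1$) fournit une équivalence
$$(X\otimes\ZZ)^{\uni}(A) \simeq \holim_n (X\otimes\ZZ)^{\uni}(A/\pi^n).$$
En vertu du paragraphe précédent, cette tour a tous ses morphismes de transition des équivalences; sa limite homotopique est donc équivalente à chacun de ses termes $(X\otimes\ZZ)^{\uni}(A/\pi^n)$, et la projection correspondante est une équivalence. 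Les deux morphismes de l'énoncé sont ainsi des équivalences. Enfin, la description comme champ constant de fibre $X_p$ résulte de la combinaison de ces équivalences avec le calcul de la fibre en caractéristique $p$ (corollaire \ref{ca2}), qui identifie $(X\otimes\ZZ)^{\uni}(k)$ au complété $p$-adique $X_p$.

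Le point délicat est l'étape de continuité: il faut justifier soigneusement l'identification $A\simeq\holim_n A/\pi^n$ dans la catégorie homotopique des anneaux commutatifs cosimpliciaux, puis l'interversion de $\Map(C^*(X,\ZZ),-)$ avec cette limite homotopique. C'est précisément cette interversion qui permet de contourner le calcul direct, autrement épineux, de la cohomologie de $\HH$ au-dessus de l'anneau de caractéristique mixte $A$, en déplaçant toute la difficulté vers la fibre spéciale $k$, où la constance de $\HH$ (corollaire \ref{cp1}) rend le problème transparent. Il conviendra également de veiller à la convergence de la suite spectrale de descente invoquée à l'étape de rigidité afin de se ramener effectivement aux faisceaux d'homotopie.
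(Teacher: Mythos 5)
Your proof is essentially the paper's own argument: the paper likewise obtains $(X\otimes \ZZ)^{\uni}(A)\simeq \holim_n (X\otimes \ZZ)^{\uni}(A/\pi^n)$ from the identification $A=\lim_n A/\pi^n$ viewed as a homotopy limit of cosimplicial rings, and then reduces to showing that each map $(X\otimes \ZZ)^{\uni}(A/\pi^n)\to (X\otimes \ZZ)^{\uni}(k)$ is an equivalence via the theorem \ref{p3}, the corollary \ref{cp1} and a standard Postnikov dévissage --- precisely your \emph{continuité} and \emph{rigidité} steps, in the opposite order. Your elaboration of the dévissage is finer than the paper's one-line sketch and is sound in outline; the only slips are cosmetic: the constant sheaves $\pi_i(X)\otimes\hat{\ZZ}_p$ are pro-étale (limits of finite constant sheaves, the limit being controlled by the corollary \ref{ca}) rather than \emph{lisses}, and the corollary \ref{ca2} identifies the \emph{stack} $(X\otimes k)^{\uni}$ with the constant stack of fiber $X_p$, not the space $(X\otimes\ZZ)^{\uni}(k)$ with $X_p$ (for $k=\FF_p$ that space is $X_p^{S^1}$).
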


\begin{proof}
Tout d'abord, comme $A=lim_n A/\pi^n$, et 
que cette limite peut être comprise comme une limite homotopique
dans les anneaux cosimpliciaux, on a 
$$(X\otimes \ZZ)^{\uni}(A) \simeq \holim_n (X\otimes \ZZ)^{\uni}(A/\pi^n).$$
Il suffit donc de vérifier que pour tout $n$ le morphisme
$(X\otimes \ZZ)^{\uni}(A/\pi^n) \longrightarrow (X\otimes \ZZ)^{\uni}(k)$
est une équivalence. Mais
cela se déduit du théorème \ref{p3}, 
du corollaire \ref{cp1}, et d'un dévissage de Postnikov standard.
\end{proof}

\section{Propriétés d'injectivité du foncteur d'affinisation}

Pour terminer, nous allons étudier les propriétés de l'$\s$-foncteur
d'affinisation 
$$(-\otimes \ZZ)^{\uni} : Top_{\geq 1}^{\tf} \longrightarrow ChAff,$$
où $Top_{\geq 1}^{\tf} \subset Top$ est la sous-$\s$-catégorie pleine
des espaces simplement connexes et de type fini. 
Nous allons d'abord voir que ce foncteur n'est pas pleinement fidèle, 
en décrivant explicitement l'unité de l'adjonction
$$X \longrightarrow (X\otimes \ZZ)^{\uni}(\ZZ).$$

L'anneau $\ZZ$ entre dans un carré cartésien d'anneaux commutatifs
$$\xymatrix{
\ZZ \ar[r] \ar[d] & \hat{\ZZ}=\prod_p \hat{\ZZ}_p \ar[d] \\
\QQ \ar[r] & \A,}$$
où $\A:=(\prod_p \hat{\ZZ}_p)\otimes \QQ$ est l'anneau des adèles. Ce carré est 
encore homotopiquement cartésien lorsque considéré dans l'$\s$-catégorie
des anneaux cosimpliciaux, car il induit une suite exacte courte
$$\xymatrix{ 0 \ar[r] & 
\ZZ \ar[r] & \QQ \oplus \hat{\ZZ} \ar[r] & \A \ar[r] & 0.}$$
Ainsi, pour tout champ affine $F$ on a une décomposition naturelle
$$F(\ZZ)\simeq F(\QQ) \times_{F(\A)}\prod_p F(\hat{\ZZ}_p).$$
Supposons maintenant que $F=(X\otimes \ZZ)^{\uni}$ soit l'affinisé d'un espace
simplement connexe et de type fini. On sait que $\HH$ devient naturellement isomorphe au groupe
additif sur $\Spec(\QQ)$ et ainsi, pour toute $\QQ$-algèbre $R$, on a
$\HH(R)\simeq R$. Mieux, $\mathbb{G}_a$ ne possédant pas de cohomologie sur
les schémas affines, le théorème \ref{p3} nous donne des isomorphismes de groupes abéliens
$$\pi_i((X\otimes \ZZ)^{\uni}(\QQ))\simeq \pi_i(X)\otimes \QQ \qquad
\pi_i((X\otimes \ZZ)^{\uni}(\A))\simeq \pi_i(X)\otimes \A.$$
Par ailleurs, le corollaire \ref{c22}
implique que le champ
$(X\otimes \ZZ)^{\uni}$, restreint au petit site profini-étale 
de $\Spec(\hat{\ZZ}_p)$, est un champ constant de
fibre $X_p$.
En particulier, on trouve une équivalence canonique pour tout 
premier $p$
$$(X\otimes \ZZ)^{\uni}(\hat{\ZZ}_p) \simeq X_p^{S^1},$$
où $X_p^{S^1}$ est l'espace des lacets libres dans $X_p$. 
On trouve ainsi une décomposition naturelle pour tout $X$
$$(X\otimes \ZZ)^{\uni}(\ZZ) \simeq X_\QQ \times_{X_\A}\prod_p X_p^{S^1},$$
où $X_\QQ$ est le rationalisé de $X$, $X_p$ son complété p-adique et 
$X_\A$ est une notation pour $(X\otimes \ZZ)^{\uni}(\A)$.

La conclusion de cette discussion est l'énoncé suivant, qui calcule les groupes
d'homotopie de l'espace $(X\otimes \ZZ)^{\uni}(\ZZ)$ en termes
de ceux de $X$.

\begin{cor}\label{c4}
Pour $X$ un espace simplement connexe 
et de type fini on a des isomorphismes fonctoriels en $X$
$$\pi_i((X\otimes \ZZ)^{\uni}(\ZZ))\simeq \pi_i(X) \oplus \prod_p \pi_{i+1}(X)^{\wedge}_p,$$
où $M^{\wedge}_p=\lim_n M/p^n$ désigne la complétion $p$-adique d'un groupe
abélien $M$.
\end{cor}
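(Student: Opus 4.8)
The plan is to compute the homotopy groups of the homotopy pullback decomposition
$$(X\otimes \ZZ)^{\uni}(\ZZ) \simeq X_\QQ \times_{X_\A}\prod_p X_p^{S^1}$$
established above, via the associated Mayer--Vietoris long exact sequence. Write $\pi_i = \pi_i(X)$, a finitely generated abelian group, and set $P = (X\otimes \ZZ)^{\uni}(\ZZ)$. We already know that $\pi_i(X_\QQ) \simeq \pi_i\otimes\QQ$ and $\pi_i(X_\A) \simeq \pi_i\otimes\A$, since over the $\QQ$-algebras $\QQ$ and $\A$ the sheaf $\HH$ becomes $\mathbb{G}_a$.

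First I would compute the homotopy of the loop term. As $X$ is simply connected, so is its $p$-completion $X_p$, and the free loop space sits in a fibration $\Omega X_p \to X_p^{S^1}\to X_p$ (evaluation at the base point) which is split by the constant-loops section. This yields a natural splitting $\pi_i(X_p^{S^1}) \simeq \pi_i(X_p)\oplus\pi_{i+1}(X_p)$, and since $\pi_j(X_p)\simeq (\pi_j)^{\wedge}_p \simeq \pi_j\otimes\hat\ZZ_p$ for the finitely generated group $\pi_j$, taking the product over all primes gives
$$\pi_i\left(\prod_p X_p^{S^1}\right) \simeq \left(\prod_p (\pi_i)^{\wedge}_p\right)\oplus\left(\prod_p(\pi_{i+1})^{\wedge}_p\right) \simeq (\pi_i\otimes\hat\ZZ)\oplus(\pi_{i+1}\otimes\hat\ZZ),$$
the last identification using that $M\otimes\hat\ZZ\simeq\prod_p M^{\wedge}_p$ for $M$ finitely generated. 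All the spaces in play have abelian fundamental groups, so the Mayer--Vietoris sequence applies.

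Next I would identify the Mayer--Vietoris map $\phi_i$ from the middle term $(\pi_i\otimes\QQ)\oplus(\pi_i\otimes\hat\ZZ)\oplus(\pi_{i+1}\otimes\hat\ZZ)$ to $\pi_i\otimes\A$, namely the difference of the maps induced by $X_\QQ\to X_\A$ and $\prod_p X_p^{S^1}\to X_\A$. On the factor $\pi_i\otimes\QQ$ it is the localization $\QQ\to\A$, and on the constant-loops factor $\pi_i\otimes\hat\ZZ$ it is induced by $\hat\ZZ\hookrightarrow\A$; the crucial point is that the $\Omega$-factor $\pi_{i+1}\otimes\hat\ZZ$ maps to \emph{zero}. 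This vanishing is where the main work lies: the loop direction of $X_p^{S^1}$ is detected by the degree-one cohomology of the profini-étale site of $\Spec(\hat\ZZ_p)$ (equivalently by $H^1(\hat\ZZ_p,-)$, using corollaires \ref{cp1} and \ref{c22}), whereas over the $\QQ$-algebra $\A$ the sheaf $\HH$ becomes $\mathbb{G}_a$, which has no higher cohomology on affine schemes; the corresponding class therefore dies in $\pi_i(X_\A)\simeq\pi_i\otimes\A$. I expect this identification of $\phi_i$, and in particular the vanishing of the $\Omega$-contribution, to be the principal obstacle.

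Granting this, I would conclude by the arithmetic fracture sequence. For the finitely generated group $\pi_i$ the short exact sequence of rings $0\to\ZZ\to\QQ\oplus\hat\ZZ\to\A\to 0$ stays exact after $-\otimes\pi_i$, giving
$$0 \to \pi_i \to (\pi_i\otimes\QQ)\oplus(\pi_i\otimes\hat\ZZ) \to \pi_i\otimes\A \to 0.$$
Hence $\phi_i$ is surjective with kernel $\pi_i\oplus(\pi_{i+1}\otimes\hat\ZZ)$. Surjectivity of $\phi_{i+1}$ forces the Mayer--Vietoris connecting homomorphism into $\pi_i(P)$ to vanish, whence $\pi_i(P)\simeq\ker(\phi_i)\simeq\pi_i\oplus(\pi_{i+1}\otimes\hat\ZZ)\simeq\pi_i(X)\oplus\prod_p\pi_{i+1}(X)^{\wedge}_p$. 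Since every map involved is natural in $X$, the resulting isomorphism is functorial, as claimed.
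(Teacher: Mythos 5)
Your proposal is correct and follows essentially the same route as the paper: the fracture decomposition $(X\otimes \ZZ)^{\uni}(\ZZ)\simeq X_\QQ \times_{X_\A}\prod_p X_p^{S^1}$, the long exact sequence of the homotopy pullback, the identification of each term, and the vanishing of the loop-direction contribution in $\pi_i(X)\otimes\A$, which yields the short exact sequences and the claimed splitting. The only difference is one of emphasis: the step you single out as the principal obstacle (the $\Omega$-factor mapping to zero) is exactly what the paper dispatches with \og trivial par construction \fg, and your filtration argument via $H^1$ of the profini-étale site versus the vanishing of higher $\mathbb{G}_a$-cohomology on affines is a reasonable way to substantiate it within the paper's framework.
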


\begin{proof}
En effet, posons $F=(X\otimes \ZZ)^{\uni}$. La décomposition 
$$F(\ZZ)\simeq F(\QQ) \times_{F(\A)}\prod_p F(\hat{\ZZ}_p)$$
induit une suite longue en homotopie de la forme
$$\xymatrix{
\pi_i(F(\ZZ)) \ar[r] & \pi_i(F(\QQ)) \oplus \prod_p \pi_i(F(\hat{\ZZ}_p)) \ar[r] &  \pi_i(F(\A)).}$$
Cette suite s'écrit aussi
$$\xymatrix{
\pi_i(F(\ZZ)) \ar[r] & \pi_i(X)_\QQ \oplus \prod_p \pi_i(X)^{\wedge}_p \oplus 
\prod_p \pi_{i+1}(X)^{\wedge}_p
\ar[r] &  \pi_i(X)\otimes \A,}$$
et donne ainsi lieu à des suites exactes courtes
$$\xymatrix{0 \ar[r]& 
\pi_i(F(\ZZ)) \ar[r] & \pi_i(X)_\QQ \oplus \prod_p \pi_i(X)^{\wedge}_p \oplus 
\prod_p \pi_{i+1}(X)^{\wedge}_p
\ar[r] &  \pi_i(X)\otimes \A \ar[r] & 0.}$$
Comme le morphisme $\prod_p \pi_{i+1}(X)^{\wedge}_p \longrightarrow \pi_i(X)\otimes \A$ est 
trivial
par construction, on en déduit le corollaire.
\end{proof}

Une conséquence intéressante du corollaire précédent est la formule suivante, qui 
donne un calcul explicite des groupes de cohomologie $H_{\fpqc}^i(\Spec(\ZZ,\HH))$.

\begin{cor}\label{c5}
Pour tout groupe abélien de type fini $M$
$$H^0(\Spec(\ZZ),M\otimes \HH)\simeq M, \qquad H^i_{\fpqc}(\Spec(\ZZ),M\otimes \HH)=0
(\, \forall \, i>1)$$
et 
$$H^1_{\fpqc}(\Spec(\ZZ), M\otimes\HH)\simeq M\otimes \hat{\ZZ}.$$
\end{cor}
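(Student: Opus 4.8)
The plan is to reduce everything to Corollary \ref{c4} by applying it to Eilenberg--MacLane spaces. The key dictionary is the standard interpretation of \fpqc cohomology through the stacks $K(G,n)$: for an abelian \fpqc sheaf $G$ and $0\leq i \leq n$, the homotopy groups of the global sections are given by
$$\pi_i(K(G,n)(\ZZ)) \simeq H^{n-i}_{\fpqc}(\Spec(\ZZ),G).$$
I would first fix an integer $j\geq 0$ and choose $n$ with $n>j$ and $n\geq 2$, then take $X=K(M,n)$. Since $M$ is of type fini, this $X$ is simply connexe et de type fini, so Corollaire \ref{c4} applies to it.

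The second step uses Lemme \ref{l3}, which (for $n>1$ and $M$ de type fini) gives an equivalence $(K(M,n)\otimes \ZZ)^{\uni}\simeq K(M\otimes\HH,n)$. Combining this with the dictionary above yields
$$H^j_{\fpqc}(\Spec(\ZZ),M\otimes\HH) \simeq \pi_{n-j}\bigl(K(M\otimes\HH,n)(\ZZ)\bigr) \simeq \pi_{n-j}\bigl((K(M,n)\otimes\ZZ)^{\uni}(\ZZ)\bigr),$$
where the choice $n>j$ guarantees that $n-j$ lies in the admissible range.

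The third step is to evaluate the right-hand side with Corollaire \ref{c4}, which gives
$$\pi_{n-j}\bigl((K(M,n)\otimes\ZZ)^{\uni}(\ZZ)\bigr) \simeq \pi_{n-j}(K(M,n)) \oplus \prod_p \pi_{n-j+1}(K(M,n))^{\wedge}_p.$$
Since $\pi_m(K(M,n))$ equals $M$ when $m=n$ and vanishes otherwise, reading off the three cases is immediate: for $j=0$ the first summand contributes $M$ and the second vanishes; for $j=1$ the first summand vanishes and the second contributes $\prod_p M^{\wedge}_p$; and for $j\geq 2$ both summands vanish. This produces exactly the three asserted values, once one identifies $\prod_p M^{\wedge}_p \simeq M\otimes\hat{\ZZ}$, which holds because $M$ is de type fini (hence de présentation finie), so the tensor product commutes with the product over $p$ and $M\otimes\hat{\ZZ}_p \simeq M^{\wedge}_p$.

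The only point requiring care is the passage between \fpqc cohomology and the homotopy of $K(G,n)(\ZZ)$, together with the verification that $n$ may be taken arbitrarily large while staying in the simply connected finite-type setting where Lemme \ref{l3} and Corollaire \ref{c4} both hold; granting this, the computation is purely formal. Everything else is bookkeeping on the homotopy groups of Eilenberg--MacLane spaces.
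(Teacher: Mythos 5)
Your proof is correct and is essentially identical to the paper's own argument, which likewise applies Corollaire \ref{c4} to $X=K(M,n)$ and uses the formula $\pi_i(K(A,n)(\ZZ))\simeq H^{n-i}_{\fpqc}(\Spec(\ZZ),A)$; you have merely made explicit the steps the paper leaves implicit (the appeal to Lemme \ref{l3}, the choice of $n>j$, and the identification $\prod_p M^{\wedge}_p\simeq M\otimes\hat{\ZZ}$).
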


\begin{proof}
On applique le corollaire \ref{c4}
à $X=K(M,n)$ et on utilise la formule
$$\pi_i(K(A,n)(\ZZ))\simeq H^{n-i}_{\fpqc}(\Spec(\ZZ),A)$$
pour tout faisceau abélien $A$.
\end{proof}

Pour terminer, nous revenons sur l'$\s$-foncteur
$X \mapsto (X\otimes \ZZ)^{\uni}(\ZZ)$ de l'$\s$-catégorie
$Top_{\geq 1}^{\tf}$ des espaces simplement connexes et de type fini
vers $Top$. 
Nous avons vu qu'il est équivalent à l'$\s$-foncteur
$$X \mapsto X_\QQ \times_{X_{\A}}\prod_p X_p^{S^1}.$$
Nous pouvons dire un peu plus. Tout d'abord, notons 
que le morphisme $\prod_p X_p^{S^1} \longrightarrow X_{\A}$ 
possède en réalité une factorisation canonique
$$\prod_p X_p^{S^1} \longrightarrow \prod_p X_p \longrightarrow X_{\A},$$
où le premier morphisme est l'évaluation au point de base de $S^1$.
Pour voir cela on introduit l'anneau 
$\hat{\bar{\ZZ}}:=\prod_p\hat{\bar{\ZZ}}_p$
où $\hat{\ZZ}_p \subset \hat{\bar{\ZZ}}_p$ est l'extension étale maximale non-ramifiée. 
En d'autres termes 
$\hat{\bar{\ZZ}}_p=\WW_{p^\s}(\bar{\FF}_p)$. Les Frobenius
fournissent un automorphisme $Fr$ de l'anneau  $\hat{\bar{\ZZ}}$. Nous
noterons aussi $\bar{\A}:=\hat{\bar{\ZZ}}\otimes \QQ$. Notons immédiatement que l'on
dispose d'une suite exacte courte
$$\xymatrix{
0 \ar[r] & \A \ar[r] & \bar{\A} \ar[r]^-{1-Fr} & \bar{\A} \ar[r] & 0.}$$

On considère maintenant le diagramme commutatif suivant
$$\xymatrix{
(X\otimes \ZZ)^{\uni}(\hat{\ZZ}) \ar[r] \ar[d] & (X\otimes \ZZ)^{\uni}(\hat{\bar{\ZZ}}) \ar[d] \\
(X\otimes \ZZ)^{\uni}(\A) \ar[r] & (X\otimes \ZZ)^{\uni}(\bar{\A}).}$$
Le Frobenius opère de manière compatible sur les membres
de droites, et nous noterons par $(-)^{Fr}$ les points fixes
homotopiques de cette action.

\begin{lem}\label{lfix}
Soit $R$ une $\QQ$-algèbre commutative et $Fr : R \longrightarrow R$
un endomorphisme de $R$. On suppose que $1-Fr : R \longrightarrow R$
est surjectif et on note $R^{Fr} \subset R$ le sous-anneau 
des points fixes de $F$ sur $R$. Alors, pour tout espace 
simplement connexe de type fini $X$, le morphisme naturel
$$(X\otimes \ZZ)(R^{Fr}) \longrightarrow (X\otimes \ZZ)(R)^{Fr}$$
est une équivalence faible.
\end{lem}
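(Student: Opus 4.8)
The difficulty to keep in mind is that $1-Fr$ is additive but not multiplicative, so the sequence $0 \to R^{Fr} \to R \xrightarrow{1-Fr} R \to 0$ is \emph{not} a diagram of rings, and the affinization functor $(-\otimes\ZZ)^{\uni}$ cannot be applied to it termwise. The plan is therefore to sidestep this and to compare the two sides \emph{on homotopy groups}, exploiting that over a $\QQ$-algebra the sheaf $\HH$ degenerates to $\mathbb{G}_a$. First I would record the computation of the sections over a $\QQ$-algebra. Since $Fr$ is a ring endomorphism it fixes $\QQ$, so $R^{Fr}$ is a $\QQ$-subalgebra, and both $R$ and $R^{Fr}$ are commutative $\QQ$-algebras. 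As $\HH$ restricts to $\mathbb{G}_a$ over $\QQ$, the faisceau $\pi_i(X)\otimes\HH$ of theorem \ref{p3} restricts over $\Spec(R)$ to the quasi-cohérent sheaf $\pi_i(X)\otimes\mathbb{G}_a$, which has no higher fpqc cohomology on an affine scheme. The descent spectral sequence for the sections of the champ $(X\otimes\ZZ)^{\uni}$ thus collapses and yields isomorphisms naturels in the $\QQ$-algebra variable
$$\pi_i((X\otimes\ZZ)^{\uni}(R)) \simeq \pi_i(X)\otimes_\ZZ R,$$
under which the action of $Fr$ on the left corresponds to $\mathrm{id}\otimes Fr$ on the right. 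Since $X$ is simply connected these groups vanish for $i\leq 1$, so both $(X\otimes\ZZ)^{\uni}(R)$ and $(X\otimes\ZZ)^{\uni}(R^{Fr})$ are simply connected.

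Next I would identify the homotopy fixed points $(-)^{Fr}$ with the homotopy equalizer of $\mathrm{id}$ and $Fr$ acting on $Y := (X\otimes\ZZ)^{\uni}(R)$, that is, the carré homotopiquement cartésien of $(\mathrm{id},Fr)\colon Y \to Y\times Y$ along the diagonal $Y \to Y\times Y$. Because $Fr$ acts by group homomorphisms on the abelian groups $\pi_i(Y)$ (here $i\geq 2$, so no fundamental-group subtleties arise), the Mayer--Vietoris long exact sequence of this pullback gives suites exactes courtes
$$0 \to \mathrm{coker}(1-Fr \mid \pi_{i+1}(Y)) \to \pi_i(Y^{Fr}) \to \ker(1-Fr \mid \pi_i(Y)) \to 0.$$

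Then I would feed in the first step. Tensoring the exact sequence $0 \to R^{Fr} \to R \xrightarrow{1-Fr} R \to 0$ of $\ZZ$-modules with the groupe de type fini $\pi_i(X)$, and using that $R$ is plat sur $\ZZ$ (being a $\QQ$-espace vectoriel) so that the relevant $\mathrm{Tor}_1^\ZZ$ vanishes, produces an exact sequence
$$0 \to \pi_i(X)\otimes R^{Fr} \to \pi_i(X)\otimes R \xrightarrow{1-Fr} \pi_i(X)\otimes R \to 0.$$
Hence $\ker(1-Fr \mid \pi_i(Y)) \simeq \pi_i(X)\otimes R^{Fr}$, and since $1-Fr$ is surjectif on $R$ by hypothesis, $\mathrm{coker}(1-Fr \mid \pi_{i+1}(Y)) = 0$. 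Combining with the Mayer--Vietoris sequence gives $\pi_i(Y^{Fr}) \simeq \pi_i(X)\otimes R^{Fr} \simeq \pi_i((X\otimes\ZZ)^{\uni}(R^{Fr}))$, and one checks from naturalité that the morphisme naturel of the statement realizes precisely this isomorphism. Being an isomorphism on all homotopy groups of simply connected spaces, it is a weak equivalence. The main obstacle is the first step: one must be sure that the sections over a $\QQ$-algebra receive no contribution from higher cohomology, so that $\pi_i$ is \emph{exactly} $\pi_i(X)\otimes R$ with the $Fr$-action given by $\mathrm{id}\otimes Fr$ --- this functoriality in $R$ is what makes the homotopy-group comparison work, and it rests squarely on theorem \ref{p3} and the acyclicity of $\mathbb{G}_a$ on affines.
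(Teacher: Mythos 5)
Your proposal is correct and follows essentially the same route as the paper: identify $\pi_i((X\otimes\ZZ)^{\uni}(R))\simeq\pi_i(X)\otimes R$ for $\QQ$-algebras via Theorem \ref{p3} and the degeneration of $\HH$ to $\mathbb{G}_a$ in characteristic zero, then run the long exact sequence for the homotopy fixed points of $Fr$ (your Mayer--Vietoris short exact sequences are just a repackaging of it), using surjectivity of $1-Fr$ to kill the cokernel term and flatness of $R$ over $\ZZ$ to identify the kernel with $\pi_i(X)\otimes R^{Fr}$. The only difference is that you spell out details the paper leaves implicit (that $R^{Fr}$ is a $\QQ$-subalgebra, the $\mathrm{Tor}$-vanishing, and the identification of the $Fr$-action with $\mathrm{id}\otimes Fr$), which is a welcome precision rather than a divergence.
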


\begin{proof}
On connait les groupes d'homotopie
de $(X\otimes \ZZ)(R)$ et $(X\otimes \ZZ)(R^{Fr})$,
qui ne sont autre que les $\pi_i(X)\otimes R$ et $\pi_i(X)\otimes (R^{Fr})$. 
Le lemme se vérifie alors directement en considérant la suite
exacte longue en homotopie pour les points fixes homotopiques
de $Fr$
$$\xymatrix{\dots \ar[r] & 
\pi_i((X\otimes \ZZ)(R)^{Fr}) \ar[r] & \pi_i((X\otimes \ZZ)(R))  \ar[r]^{1-Fr} & 
\pi_i((X\otimes \ZZ)(R)) \ar[r] &  \pi_{i-1}((X\otimes \ZZ)(R)^{Fr})}$$
qui montre que $\pi_i((X\otimes \ZZ)(R)^{Fr})$ s'identifie canoniquement
à $\pi_i(X)\otimes (R^{Fr})$. 
\end{proof}

Le lemme précédent nous dit que le morphisme naturel
$$(X\otimes \ZZ)^{\uni}(\A) \longrightarrow (X\otimes \ZZ)^{\uni}(\bar{\A})^{Fr}$$
est une équivalence. Comme l'action
de Frobenius est canoniquement triviale sur $(X\otimes \ZZ)^{\uni}(\hat{\bar{\ZZ}})$, nous 
en déduisons un morphisme naturel $\phi$ qui fait commuter le diagramme ci-dessous
$$\xymatrix{
(X\otimes \ZZ)^{\uni}(\hat{\ZZ}) \ar[r] \ar[d] & (X\otimes \ZZ)^{\uni}(\hat{\bar{\ZZ}}) \ar[d]
\ar[ld]_-{\phi} \\
(X\otimes \ZZ)^{\uni}(\A) \ar[r] & (X\otimes \ZZ)^{\uni}(\bar{\A}).}$$
Ce morphisme $\phi$ fournit la factorisation canonique cherchée.

Nous pouvons donc maintenant écrire de manière fonctorielle en $X$ un 
diagramme commutatif à carrés cartésiens
$$\xymatrix{
& (X\otimes \ZZ)^{\uni}(\ZZ) \ar[r] \ar[d] & \prod_p X_p^{S^1} \ar[d] \\
X \ar[r]^-{\alpha} \ar[ru] & Y \ar[r] \ar[d] &  \prod_p X_p \ar[d] \\
 & X_\QQ \ar[r] & X_{\A}.}$$
Par inspection de la suite exacte longue en homotopie, on voit que le morphisme
$\alpha$ est en réalité une équivalence. Nous avons donc montré le corollaire suivant.

\begin{cor}\label{c6}
Pour tout $X\in Top_{\geq 1}^{\tf}$, le morphisme d'adjonction
$X \longrightarrow (X\otimes \ZZ)^{\uni}(\ZZ)$
possède une rétraction 
$$r : (X\otimes \ZZ)^{\uni}(\ZZ) \longrightarrow X,$$
fonctorielle en $X$.
\end{cor}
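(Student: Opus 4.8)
The plan is to construct the retraction directly from the arithmetic fracture decomposition of $(X\otimes\ZZ)^{\uni}(\ZZ)$ obtained above, and to identify the target of the natural projection with $X$ itself through Sullivan's arithmetic square. Two inputs drive the argument. First, the homotopy cartesian square of cosimplicial rings attached to $\ZZ = \QQ\times_\A\hat{\ZZ}$ gives, for $F=(X\otimes\ZZ)^{\uni}$, a natural equivalence
$$(X\otimes\ZZ)^{\uni}(\ZZ) \simeq X_\QQ \times_{X_\A} \prod_p X_p^{S^1},$$
using $\pi_i(F(\QQ))\simeq\pi_i(X)\otimes\QQ$, $\pi_i(F(\A))\simeq\pi_i(X)\otimes\A$, and the rigidity identification $F(\hat{\ZZ}_p)\simeq X_p^{S^1}$ coming from Corollary \ref{c22}. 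Second, for a simply connected space of finite type the classical arithmetic fracture theorem yields a natural equivalence $X \simeq X_\QQ \times_{X_\A} \prod_p X_p$, which is exactly the map $\alpha$ whose equivalence is checked by inspecting the long exact sequence in homotopy.

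Next I would exploit the evaluation-at-basepoint map $\mathrm{ev} : X_p^{S^1}\to X_p$ together with the factorization $\prod_p X_p^{S^1}\to\prod_p X_p\to X_\A$ of the structure morphism to $X_\A$, namely the morphism $\phi$ produced by the Frobenius fixed-point argument. This factorization is precisely what makes $\mathrm{ev}$ compatible with the two maps into $X_\A$, so that the product of the evaluations induces a natural morphism of pullbacks
$$r_0 : (X\otimes\ZZ)^{\uni}(\ZZ) \simeq X_\QQ\times_{X_\A}\prod_p X_p^{S^1} \longrightarrow X_\QQ\times_{X_\A}\prod_p X_p \simeq X.$$
I would then set $r := \alpha^{-1}\circ r_0$, where $\alpha$ is the arithmetic-square equivalence.

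Finally I would verify that $r$ is a retraction. Writing $u : X \to (X\otimes\ZZ)^{\uni}(\ZZ)$ for the adjunction unit, under the fracture identification its composite with $r_0$ is induced on each $p$-factor by the constant-loop inclusion $X_p\to X_p^{S^1}$ followed by $\mathrm{ev}$, which is the identity; hence $r_0\circ u = \alpha$ and therefore $r\circ u = \mathrm{id}_X$. Functoriality in $X$ is then automatic, since every ingredient — the two fracture equivalences, the rigidity identification of Corollary \ref{c22}, the evaluation $\mathrm{ev}$, and the factorization $\phi$ — is natural in $X$. The main obstacle is not the retraction check itself but securing the factorization $\phi$: one must know that $\prod_p X_p^{S^1}\to X_\A$ genuinely passes through $\prod_p X_p$, and this is exactly where the exact sequence $0\to\A\to\bar{\A}\xrightarrow{1-Fr}\bar{\A}\to 0$ and the identification $(X\otimes\ZZ)^{\uni}(\A)\simeq (X\otimes\ZZ)^{\uni}(\bar{\A})^{Fr}$ furnished by Lemma \ref{lfix} do the essential work.
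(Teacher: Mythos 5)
Votre preuve est correcte et suit essentiellement la même démarche que celle du texte : décomposition adélique de $(X\otimes\ZZ)^{\uni}(\ZZ)$ via le carré $\ZZ=\QQ\times_{\A}\hat{\ZZ}$, identification $F(\hat{\ZZ}_p)\simeq X_p^{S^1}$ par le corollaire \ref{c22}, factorisation $\phi$ par l'argument de points fixes de Frobenius (lemme \ref{lfix}), puis comparaison avec le carré arithmétique de Sullivan $X\simeq X_\QQ\times_{X_\A}\prod_p X_p$ via l'équivalence $\alpha$. Votre vérification explicite que $r_0\circ u=\alpha$ (lacets constants suivis de l'évaluation) rend simplement explicite ce que le diagramme commutatif à carrés cartésiens du texte encode.
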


On déduit de ce corollaire le fait important suivant.

\begin{cor}\label{c7}
L'$\s$-foncteur de schématisation 
$$(-\otimes \ZZ)^{\uni} : Top_{\geq 1}^{\tf} \longrightarrow ChAff$$
possède les propriétés suivantes.
\begin{enumerate}
\item Il est injectif sur les classes d'équivalences
d'objets: deux espaces simplement connexes et de type fini
$X$ et $Y$ sont faiblement équivalents si et seulement 
si les champs $(X\otimes \ZZ)^{\uni}$ et $(Y\otimes \ZZ)^{\uni}$
sont équivalents.

\item Pour deux espaces $X$ et $Y$ simplement connexes et de type fini, 
le morphisme d'ensembles simpliciaux
$$\Map_{Top}(X,Y) \longrightarrow \Map_{St_\ZZ}((X\otimes \ZZ)^{\uni},(Y\otimes \ZZ)^{\uni})$$
possède une rétraction, et en particulier est injectif en homotopie.
\end{enumerate}
\end{cor}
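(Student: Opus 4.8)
The plan is to derive both statements from Corollary~\ref{c6}, which provides a functorial retraction $r\colon (X\otimes \ZZ)^{\uni}(\ZZ)\to X$ of the adjunction unit. The key observation is that the affinization functor is, by construction, left adjoint to global sections $\Gamma=(-)(\ZZ)$; consequently, morphisms between affinizations are controlled by the mapping spaces computed through $\Gamma$. I would begin by making the adjunction explicit at the level of mapping spaces, in order to transport the retraction $r$ from global sections to morphisms of stacks.

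Let me spell this out. For $X,Y\in Top_{\geq 1}^{\tf}$, the adjunction between $(-\otimes\ZZ)^{\uni}$ and $\Gamma$ yields a natural equivalence
$$\Map_{St_\ZZ}\bigl((X\otimes \ZZ)^{\uni},(Y\otimes \ZZ)^{\uni}\bigr)\simeq \Map_{Top}\bigl(X,(Y\otimes \ZZ)^{\uni}(\ZZ)\bigr).$$
Under this identification, the map in part $(2)$ of the corollary becomes simply post-composition with the unit $\eta_Y\colon Y\to (Y\otimes \ZZ)^{\uni}(\ZZ)$, i.e. the map
$$(\eta_Y)_*\colon \Map_{Top}(X,Y)\longrightarrow \Map_{Top}\bigl(X,(Y\otimes \ZZ)^{\uni}(\ZZ)\bigr).$$
Now the retraction $r_Y\colon (Y\otimes \ZZ)^{\uni}(\ZZ)\to Y$ from Corollary~\ref{c6} satisfies $r_Y\circ\eta_Y\simeq \mathrm{id}_Y$, so post-composition with $r_Y$ gives a map $(r_Y)_*$ in the reverse direction with $(r_Y)_*\circ(\eta_Y)_*\simeq \mathrm{id}$. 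This produces the desired retraction of the mapping-space morphism, and injectivity on homotopy groups follows immediately since a map admitting a retraction is a split monomorphism. This settles part $(2)$.

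For part $(1)$, suppose $(X\otimes \ZZ)^{\uni}\simeq (Y\otimes \ZZ)^{\uni}$ as affine stacks. Applying $\Gamma$ gives an equivalence $(X\otimes \ZZ)^{\uni}(\ZZ)\simeq (Y\otimes \ZZ)^{\uni}(\ZZ)$. I would then argue that the functorial decomposition established before Corollary~\ref{c4}, namely
$$(X\otimes \ZZ)^{\uni}(\ZZ)\simeq X_\QQ \times_{X_\A}\prod_p X_p^{S^1},$$
together with the factored pullback diagram preceding Corollary~\ref{c6}, recovers $X$ itself as the pullback $X\simeq X_\QQ \times_{X_\A}\prod_p X_p$ (via the equivalence $\alpha$), and that all of this data is functorially extracted from the stack $(X\otimes \ZZ)^{\uni}$. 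Since an equivalence of stacks induces, compatibly, equivalences on each of the pieces $X_\QQ$, $X_\A$, and $\prod_p X_p$ appearing in the arithmetic fracture square, it induces an equivalence on the pullback $X\simeq Y$ by the fracture square for simply connected finite-type spaces.

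The main obstacle is \textbf{the functoriality and naturality bookkeeping in part $(1)$}: one must verify that the rational type $X_\QQ$, the profinite completions $X_p$, and the adelic type $X_\A$ can all be reconstructed \emph{functorially} from the abstract affine stack $(X\otimes \ZZ)^{\uni}$ alone — not merely from $X$ — so that an equivalence of stacks genuinely yields equivalences at each stage of the fracture square. Concretely, the rationalization $X_\QQ\simeq (X\otimes\ZZ)^{\uni}(\QQ)$ and the $p$-adic completions come from evaluating the stack at the rings $\QQ$, $\hat{\ZZ}_p$, $\A$ (using Corollary~\ref{c22} to identify $(X\otimes \ZZ)^{\uni}(\hat{\ZZ}_p)\simeq X_p^{S^1}$ and then the basepoint evaluation to reach $X_p$), so these are manifestly stack-invariants; the subtlety is checking that the gluing maps in the fracture square, and the passage from the free-loop-space $X_p^{S^1}$ down to $X_p$, are likewise intrinsic and compatible with an arbitrary stack equivalence. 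Once this compatibility is in place, the conclusion is formal.
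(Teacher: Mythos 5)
Your proof of part $(2)$ is correct and is essentially the paper's own argument: under the adjunction equivalence $\Map_{St_\ZZ}((X\otimes \ZZ)^{\uni},(Y\otimes \ZZ)^{\uni})\simeq \Map_{Top}(X,(Y\otimes \ZZ)^{\uni}(\ZZ))$, your retraction $(r_Y)_*$ is exactly the paper's map $R(f):=r_Y\circ f(\ZZ)\circ i_X$.

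Part $(1)$ is where you diverge, and the step you yourself call \emph{the main obstacle} is a genuine gap, not bookkeeping. Write $F=(X\otimes\ZZ)^{\uni}$ and $G=(Y\otimes\ZZ)^{\uni}$. The pieces $F(\QQ)$, $F(\A)$, $F(\hat{\bar{\ZZ}})\simeq\prod_p X_p$, and the maps among them induced by ring morphisms, are indeed stack-invariants; what is \emph{not} known to be intrinsic is precisely the gluing map $\phi_X:\prod_p X_p\to X_\A$ of the fracture square. That map is obtained by lifting $F(\hat{\bar{\ZZ}})\to F(\bar{\A})$ to the homotopy fixed points $F(\bar{\A})^{Fr}\simeq F(\A)$, and the lift requires the \emph{trivialisation} of the Frobenius action on $F(\hat{\bar{\ZZ}})$. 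In the paper this trivialisation comes from the constancy statements (Corollaries \ref{c22} and \ref{ca2}), whose proofs identify $F$ over the profinite-étale site with a constant stack via the unit map built from the space $X$ itself; it is therefore extra structure produced from $X$, not a property read off from the abstract stack. An abstract equivalence $f:F\simeq G$ commutes with the Frobenius \emph{actions}, but nothing forces it to intertwine the two trivialisations: the discrepancy is a monodromy class in $\pi_1$ of the space of self-equivalences of $\prod_p Y_p$, which is in general nonzero, and if it is nonzero your square comparing $\phi_X$ and $\phi_Y$ need not commute, so the comparison of pullbacks collapses (a twisted gluing generally changes the homotopy type of the pullback). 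The same caveat applies to your identification $F(\hat{\ZZ}_p)\simeq X_p^{S^1}$. So your closing claim that once this compatibility is in place the conclusion is formal has the emphasis backwards: that compatibility \emph{is} the missing content.

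The paper's proof of $(1)$ avoids this entirely and is the natural continuation of part $(2)$: given an equivalence $f$, form the candidate map $R(f)=r_Y\circ f(\ZZ)\circ i_X:X\to Y$ and check that it is a weak equivalence. By construction of the retraction, the map $\pi_*(R(f))$, after applying $-\otimes\HH$, agrees with $\pi_*(f)$ under the identifications $\pi_*(F)\simeq\pi_*(X)\otimes\HH$ of Theorem \ref{p3}; since $f$ is an equivalence, $\pi_*(f)$ is an isomorphism, and the conservativity of $M\mapsto M\otimes\HH$ on finitely generated abelian groups (seen by taking global sections, Corollary \ref{c5}) then makes $\pi_*(R(f))$ an isomorphism, whence $R(f)$ is a weak equivalence by Whitehead. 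You could repair your part $(1)$ by switching to this argument, which requires no intrinsic reconstruction of the fracture square from the stack.
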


\begin{proof}
Commençons par montrer $(2)$. 
Notons 
$i_X : X \longrightarrow (X\otimes \ZZ)^{\uni}(\ZZ)$ l'unité de l'adjonction, et
$r_X : (X\otimes \ZZ)^{\uni}(\ZZ) \longrightarrow X$
la rétraction du corollaire \ref{c6}. On définit
$$R : \Map_{St_\ZZ}((X\otimes \ZZ)^{\uni},(Y\otimes \ZZ)^{\uni})
\longrightarrow \Map_{Top}(X,Y)$$
par la formule
$R(f):=r_Y\circ f \circ i_X.$
Pour $(1)$, nous prétendons que 
la formule pour $R(f)$ ci-dessus préserve les morphismes induits en 
homotopie au sens suivant. Le morphisme $$f : (X\otimes \ZZ)^{\uni}(\ZZ) \longrightarrow
(Y\otimes \ZZ)^{\uni}(\ZZ)$$ 
induit d'après le théorème \ref{p3} un morphisme sur les faisceaux d'homotopie
$$\pi_*(f) : \pi_*(X)\otimes \HH \longrightarrow \pi_*(Y)\otimes \HH.$$
De même, $R(f) : X \longrightarrow Y$ induit 
une application en homotopie et donc à son tour un morphisme
$$\pi_*(R(f)) : \pi_*(X)\otimes \HH \longrightarrow \pi_*(Y)\otimes \HH.$$
Nous affirmons que les deux morphismes $\pi_*(f)$ et 
$\pi_*(R(f))$ sont égaux, ce qui se voit par construction 
de la rétraction $R$.

Ainsi, si $f$ est une équivalence, $\pi_*(f)$ est un isomorphisme. Or, le foncteur
qui à un groupe abélien de type fini $M$ associe le faisceau
abélien $M\otimes \HH$ est conservatif, comme cela se voit par exemple
en appliquant le foncteur des sections globales sur $\Spec(\ZZ)$ (voir le corollaire
\ref{c5}).
\end{proof}

\begin{rmk}
La discussion menant au corollaire \ref{c6} montre aussi que 
l'$\s$-foncteur $Top_{\geq 1}^{\tf} \longrightarrow Top$, qui à 
$X$ associe $(X\otimes \ZZ)^{\uni}(\ZZ)$, est équivalent 
à 
$$X \mapsto X\times_{\prod_p X_p}\prod_p X_p^{S^1}.$$
\end{rmk}

\begin{appendix}

\section{Le théorème d'Eilenberg-Moore}\label{appA}

Dans cet appendice nous rappelons le théorème d'Eilenberg-Moore
(voir \cite{smi}) et sa version légèrement améliorée en termes
d'algèbres cosimpliciales commutatives, dont nous redonnons une preuve.
Il sera utilisé de manière essentielle au cours de la preuve
du théorème \ref{p3} afin de montrer que l'affinisation 
d'espaces simplement connexes et de type fini commute avec 
les décompositions de Postnikov.

Nous rappelons qu'un espace $X$ est dit de \emph{type fini}, s'il est 
faiblement équivalent à un CW complexe ne possédant qu'un nombre
fini de cellules en chaque degré. Un espace
connexe et simplement connexe est de type fini si et seulement si tous ses groupes d'homotopie sont
de type fini.

\begin{thm}\label{ta1}
Soit 
$$\xymatrix{Z \ar[r] \ar[d] & Y \ar[d] \\
X \ar[r] & S}$$
un carré homotopiquement cartésien d'espaces topologiques. On suppose
de plus que les conditions suivantes sont satisfaites.
\begin{enumerate}
\item $S$ est un espace (connexe et) simplement connexe.

\item La fibre homotopique de $X \longrightarrow S$
est un espace de type fini.

\end{enumerate}
Alors, le morphisme naturel d'algèbres commutatives cosimpliciales
$$C^*(X,\ZZ)\stackrel{\mathbb{L}}{\otimes}_{C^*(S,\ZZ)}C^*(Y,\ZZ) \longrightarrow C^*(Z,\ZZ)$$
est un quasi-isomorphisme. 
\end{thm}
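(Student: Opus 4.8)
The plan is to separate the statement into two parts: a formal reduction from commutative cosimplicial algebras down to underlying complexes, and the classical Eilenberg--Moore convergence, which carries the real content.

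For the first part, I would observe that the source $C^*(X,\ZZ)\stackrel{\mathbb{L}}{\otimes}_{C^*(S,\ZZ)}C^*(Y,\ZZ)$ is by definition the homotopy pushout of $C^*(X,\ZZ) \leftarrow C^*(S,\ZZ) \to C^*(Y,\ZZ)$ computed in commutative cosimplicial algebras. The key point is that the forgetful functor from commutative cosimplicial algebras to complexes of abelian groups preserves such homotopy pushouts, because the coproduct of commutative algebra objects is the tensor product of their underlying objects, and the relevant homotopy colimit (the geometric realization of the two-sided bar construction, indexed by the sifted category $\Delta^{\op}$) is preserved on underlying objects; concretely, a cofibrant commutative $C^*(S,\ZZ)$-algebra is termwise flat, so that the naive relative tensor product already computes the derived one after normalization. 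Consequently the underlying complex of the source is the derived relative tensor product $C^*(X,\ZZ)\stackrel{\mathbb{L}}{\otimes}_{C^*(S,\ZZ)}C^*(Y,\ZZ)$ taken in the derived category of $C^*(S,\ZZ)$-modules, and since being a quasi-isomorphism is a condition on underlying complexes, it suffices to prove the statement there. This is precisely the passage from the classical Eilenberg--Moore theorem to its improved, algebra-level formulation.

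For the second part, I would prove the statement on underlying complexes by resolving the pullback. Writing $W^\bullet$ for the cobar cosimplicial space $[n]\mapsto X\times S^{\times n}\times Y$ associated with the maps $X\to S\leftarrow Y$ and the diagonal of $S$, one has $Z\simeq \holim_\Delta W^\bullet$, and the comparison morphism of the theorem identifies with the canonical edge map $\hocolim_{\Delta^{\op}}C^*(W^\bullet,\ZZ)\to C^*(\holim_\Delta W^\bullet,\ZZ)=C^*(Z,\ZZ)$. The hard part will be the convergence, and it is here that both hypotheses enter. Simple connectivity of $S$ forces the connectivity of the successive layers of the totalization tower $\{\mathrm{Tot}_kW^\bullet\}_k$ to tend to infinity with $k$, so that $Z\simeq\lim_k\mathrm{Tot}_kW^\bullet$ with strong convergence and, dually, in each fixed cohomological degree only finitely many stages contribute; the finite-type hypothesis on the homotopy fibre of $X\to S$ then guarantees that the groups appearing are finitely generated, that the relative Künneth identifications hold integrally, and that the relevant $\lim^1$ terms vanish, so that the colimit computes $H^*(Z,\ZZ)$ without integral pathologies. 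Concretely I would reduce to the universal case via the Postnikov tower of $S$ --- legitimate since $S$ is simply connected, hence built from $K(\pi_i(S),i)$ with $i\geq 2$ --- landing on the path--loop fibration over $S=K(\pi,n)$, where the assertion is the classical computation $\ZZ\stackrel{\mathbb{L}}{\otimes}_{C^*(K(\pi,n),\ZZ)}\ZZ\simeq C^*(K(\pi,n-1),\ZZ)$ recorded in \cite{smi}. The main obstacle is thus to establish this integral convergence, the simple-connectivity and finite-type hypotheses being exactly what is needed to control it.
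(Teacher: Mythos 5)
Your first step --- the reduction from cosimplicial commutative algebras to underlying complexes --- is sound, and it is also implicit in the paper's proof. The gap is in the second step, at the moment where you claim that the comparison map of the theorem ``identifies with the canonical edge map'' $\hocolim_{\Delta^{\op}}C^*(W^\bullet,\ZZ)\to C^*(Z,\ZZ)$. The source of the theorem's map is computed by the bar construction $[n]\mapsto C^*(X,\ZZ)\otimes C^*(S,\ZZ)^{\otimes n}\otimes C^*(Y,\ZZ)$, so this identification requires the termwise K\"{u}nneth maps $C^*(X,\ZZ)\otimes C^*(S,\ZZ)^{\otimes n}\otimes C^*(Y,\ZZ)\to C^*(X\times S^{\times n}\times Y,\ZZ)$ to be quasi-isomorphisms. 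Integrally this needs finiteness of the factors, and the theorem does not provide it: the only finiteness hypothesis is on the homotopy fibre of $X\to S$, while $S$, $X$ and $Y$ themselves may have infinitely generated cohomology. Concretely, take $X\to S$ to be the identity of $S=\bigvee_{i\in\NN}S^2$ and $Y$ arbitrary: the hypotheses of the theorem hold (the fibre is a point, and the conclusion is then trivially true), yet already in codegree $1$ the map $H^2(S)\otimes H^2(S)\to H^4(S\times S)$ is the map $\ZZ^{\NN}\otimes\ZZ^{\NN}\to\ZZ^{\NN\times\NN}$ whose image consists of the finite-rank matrices, hence is not surjective. So your sentence asserting that ``the finite-type hypothesis on the homotopy fibre of $X\to S$ \dots guarantees \dots that the relative K\"{u}nneth identifications hold integrally'' misplaces the hypothesis: a condition on one fibre cannot control products in which $S$ itself occurs as a factor. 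Your argument would prove the theorem under the stronger assumption that $X$, $Y$ and $S$ are all of finite type, but not as stated. The same mismatch invalidates your final Postnikov reduction: the path--loop fibration over a stage $K(\pi_n(S),n)$ has fibre $K(\pi_n(S),n-1)$, and $\pi_n(S)$ is not assumed finitely generated, so the ``universal case'' you land on is not covered by \cite{smi} nor by the theorem's own hypotheses.

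This relative (asymmetric) hypothesis is exactly what the paper's proof is built around, and it is why that proof is sheaf-theoretic rather than cobar-theoretic. The paper works in $\D(S)$, the complexes of abelian sheaves on $S$, and the only K\"{u}nneth input is fibrewise: the map $f_*(\ZZ)\stackrel{\mathbb{L}}{\otimes}g_*(\ZZ)\to\pi_*(\ZZ)$ is checked over contractible opens of $S$, where it becomes $C^*(X_s,\ZZ)\stackrel{\mathbb{L}}{\otimes}C^*(Y_s,\ZZ)\to C^*(X_s\times Y_s,\ZZ)$ with $X_s$ the fibre of $f$; such a K\"{u}nneth map is an equivalence as soon as \emph{one} factor is of finite type (identify $C^*(X_s\times Y_s,\ZZ)$ with cochains on $X_s$ with coefficients in $C^*(Y_s,\ZZ)$), with no condition at all on $Y_s$. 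The global step is then formal: $\Gamma\colon\D(S)\to C^*(S,\ZZ)\textrm{-modules}$ is lax monoidal, and $\Gamma(E)\stackrel{\mathbb{L}}{\otimes}_{C^*(S,\ZZ)}\Gamma(F)\to\Gamma(E\stackrel{\mathbb{L}}{\otimes}F)$ is shown to be an equivalence for $E=f_*\ZZ$, $F=g_*\ZZ$ by writing $E$ as a colimit of its truncations, each of which lies in the triangulated envelope of $\ZZ$ because its cohomology sheaves are constant (this is where simple connectivity of $S$ enters) with finitely generated stalks (this is where the finite-type fibre enters). To salvage your approach you would have to either strengthen the hypotheses to all spaces being of finite type, or replace the termwise K\"{u}nneth input by an argument of this fibrewise type --- at which point you have essentially reconstructed the paper's proof.
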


\begin{proof}
Représentons le diagramme d'espaces par un diagramme
de CW complexes tel que tous les morphismes soient 
des fibrations de Serre. 
On se place dans l'$\s$-catégorie $\D(X)$ des complexes
de faisceaux abéliens sur $X$ munie de sa structure symétrique monoïdale
usuelle. Notons
$$\pi : Z \longrightarrow S \qquad f : X \longrightarrow S \qquad g : Y \longrightarrow S$$
les projections. L'$\s$-foncteur d'images directes $\pi_*$ est lax monoïdal,
car adjoint à droite de $\pi^*$ qui est monoïdal, et l'on dispose ainsi d'un morphisme naturel
dans $\D(S)$
$$f_*(\ZZ)\stackrel{\mathbb{L}}{\otimes} g_*(\ZZ) \longrightarrow \pi_*(\ZZ).$$
Sur un ouvert contractile $U \subset S$, ce morphisme s'identifie en un morphisme
entre objets constants qui n'est autre que le morphisme de K\"{u}nneth
$$C^*(X_s,\ZZ) \stackrel{\mathbb{L}}{\otimes} C^*(Y_s,\ZZ) \longrightarrow C^*(Z_s,\ZZ),$$
où $X_s$, $Y_s$ et $Z_s$ sont les fibres de $X$, $Y$ et $Z$ au-dessus d'un point fixé $s\in U$.
D'après l'hypothèse $(2)$ il s'agit donc d'un quasi-isomorphisme, et ainsi le morphisme
naturel induit donc une équivalence dans $\D(S)$
$$f_*(\ZZ)\stackrel{\mathbb{L}}{\otimes} g_*(\ZZ) \simeq \pi_*(\ZZ).$$
Nous considérons maintenant l'$\s$-foncteur de sections globales sur $S$
$\Gamma : \D(S) \longrightarrow \D(*),$
qui est aussi un $\s$-foncteur lax monoïdal. En particulier, il induit un nouvel
$\s$-foncteur lax monoïdal
$$\Gamma : \D(S) \longrightarrow C^*(S,\ZZ)-Mod,$$
où $C^*(S,\ZZ)$ est ici considéré comme une $E_\s$-algèbre dans $\D(*)$
et $C^*(S,\ZZ)-Mod$ et son $\s$-catégorie monoïdale des modules dans $\D(*)$.
L'équivalence ci-dessus
fournit ainsi un morphisme naturel
$$\Gamma(f_*(\ZZ))\stackrel{\mathbb{L}}{\otimes}_{C^*(S,\ZZ)} \Gamma(g_*(\ZZ)) \longrightarrow
\Gamma(f_*(\ZZ) \stackrel{\mathbb{L}}{\otimes} g_*(\ZZ)) \simeq \Gamma(\pi_*(\ZZ)).$$
Ce morphisme dans $\D(*)$ s'identifie au morphisme de l'énoncé du théorème. Pour montrer 
qu'il s'agit d'une équivalence il suffit donc de montrer que le morphisme
$$\Gamma(f_*(\ZZ)) \stackrel{\mathbb{L}}{\otimes}_{C^*(S,\ZZ)} \Gamma(g_*(\ZZ)) \longrightarrow
\Gamma(f_*(\ZZ) \stackrel{\mathbb{L}}{\otimes} g_*(\ZZ))$$
est une équivalence. 

Pour cela, on considère l'énoncé général suivant: étant donnés
$E$ et $F$ des objets de $\D(S)$, le morphisme
$$\Gamma(E) \stackrel{\mathbb{L}}{\otimes}_{C^*(S,\ZZ)}\Gamma(F) \longrightarrow
\Gamma(E\stackrel{\mathbb{L}}{\otimes}F)$$
est une équivalence. Cet énoncé est trivialement satisfait pour $E$ 
le faisceaux constant $\ZZ$ (et tout $F$), 
car par construction $\Gamma(\ZZ) \simeq C^*(S,\ZZ)$
comme $C^*(S,\ZZ)$-modules. Par stabilité l'énoncé est vrai pour 
n'importe quels objets $E$ et $F$, avec $E$ appartenant à l'enveloppe
triangulée de l'objet $\ZZ$ dans $\D(S)$. Enfin, supposons que
$E$ soit un objet de $\D(S)$ vérifiant les hypothèses suivantes.
\begin{enumerate}

\item Les faisceaux de cohomologie $H^i(E)$ s'annulent pour $i<0$.

\item Les faisceaux de cohomologie $H^i(E)$ sont 
localement constants (et donc constants car $S$ est $1$-connexe) de fibre de type fini pour tout $i$.

\end{enumerate}

On peut alors écrire $E$ comme une colimite filtrante de ses tronqués 
$E \simeq \mathrm{colim}_n E_{\leq n}$. On a alors clairement
$\mathrm{colim}_n \Gamma(E_{\leq n}) \simeq \Gamma(E).$
De même, si $F\in \D(S)$ est un second objet satisfaisant la condition $(1)$ 
ci-dessus, alors on a 
$\mathrm{colim}_{n} (E_{\leq n}\stackrel{\mathbb{L}}{\otimes}F) \simeq \Gamma(E\stackrel{\mathbb{L}}{\otimes}F).$
Ainsi, pour deux tels $E$ et $F$  le morphisme
$$\Gamma(E) \stackrel{\mathbb{L}}{\otimes}_{C^*(S,\ZZ)} \Gamma(F) \longrightarrow
\Gamma(E\stackrel{\mathbb{L}}{\otimes}F)$$
est équivalent au morphisme
$$\mathrm{colim}_n (\Gamma(E_{\leq n}) 
\stackrel{\mathbb{L}}{\otimes}_{C^*(S,\ZZ)} \Gamma(F)) \longrightarrow
\mathrm{colim}_{n} \Gamma(E_{\leq n}\stackrel{\mathbb{L}}{\otimes}F),$$
qui est donc une équivalence car chacun des $E_{\leq n}$  
est dans l'enveloppe triangulée de $\ZZ$ dans $\D(S)$.

Cela démontre le théorème, en prenant $E=f_*\ZZ$, resp. $F=g_*\ZZ$,
qui 
satisfont bien aux conditions $(1)-(2)$, resp. à la condition $(1)$, d'après nos hypothèses
sur le morphisme $f$.
\end{proof}

\section{Le topos \fpqc}\label{appB}

Nous rappelons ici quelques faits sur le topos \fpqc, qui demandent
certains soins de nature ensemblistes. Il s'agit d'énoncés affirmant que
les limites le long de la catégorie $\NN^+$ (les entiers naturels
muni de l'ordre inverse de l'ordre naturel) se comportent essentiellement
comme dans le topos des ensembles. Ces résultats seront utilisés
de manière essentielle pour contrôler certaines limites
de champs (voir par exemple la preuve de la proposition \ref{p2}).

Nous notons
$\UU \in \VV$ deux univers de Grothendieck, et supposons
qu'il existe $x\in \UU$ qui est infini.

Soit $Aff$ la catégorie des schémas affines qui sont éléments de $\UU$. Par choix
des univers $Aff$ est une catégorie élément de $\VV$.
La catégorie
des (gros) préfaisceaux sur $Aff$ est par définition
$$\widehat{Aff}:=Fun(Aff^{\op},Ens_\VV),$$
la catégorie des foncteurs de $Aff^{\op}$ à valeurs dans la
catégorie des ensembles éléments de $\VV$. La catégorie
$Aff$ est munie d'une topologie de Grothendieck pour laquelle les familles
couvrantes $$\{\Spec(A_i) \rightarrow \Spec(A)\}_{i\in I}$$ dans $Aff$ ($I \in \UU$) 
sont celles
telles que $\Spec(A_i) \rightarrow \Spec(A)$ est plat pour tout $i$, 
et de plus il existe
un sous-ensemble fini $I'\subset I$ avec $\coprod_{i\in I'}\Spec(A_i) \rightarrow \Spec(A)$
surjectif. Cette topologie est la topologie $\fpqc$. On note
$Aff^{\sim,\fpqc} \subset \widehat{Aff}$ la sous-catégorie pleine 
formée des faisceaux \fpqc. C'est encore une catégorie $\VV$-petite, qui possède
tout type de limites et colimites $\UU$-petites.

On dispose d'une adjonction 
$$a : \widehat{Aff} \rightleftarrows Aff^{\sim,\fpqc} : j$$
où l'adjoint à droite $j$ est le foncteur d'inclusion, et 
l'adjoint à gauche $a$ est le foncteur de faisceautisation \fpqc. 

Une des propriétés remarquables du topos $Aff^{\sim,\fpqc}$ est la
suivante.

\begin{prop}\label{pa}
Le foncteur $a$ commute aux limites dénombrables.
\end{prop}

\begin{proof}
On sait que le foncteur $a$ est exact, et il suffit donc
de montrer qu'il commute aux produits dénombrables. Pour cela, soit 
$\{F_i\}_{i\geq 0}$ une famille dénombrable d'objets de $\widehat{Aff}$ 
et considérons le morphisme de faisceaux
$$\alpha : a(\prod_i F_i) \longrightarrow \prod_i a(F_i).$$
On commence par remarquer que ce morphisme est un épimorphisme. En effet, 
soit $X\in Aff$ et considérons $(x_i)_{i\geq 0}\in \prod_i a(F_i)(X)$ une section
de $\prod_i a(F_i)$ sur $X$. Pour tout $i\geq 0$, il existe 
un recouvrement \fpqc $u_i : Y_i \rightarrow X$ et un élément $y_i \in F_i(Y_i)$
tels que l'application canonique $F_i(Y_i) \rightarrow a(F_i)(Y_i)$
envoie $y_i$ sur $u_i^*(x_i)$. 

Notons $Y_i=\Spec(A_i)$, de telle sorte que $A_i$ soit une $A$-algèbre fidèlement 
plate. On considère la somme infinie dans la catégorie des $A$-algèbres commutatives $B:=\otimes_{i\geq 0}A_i$. 
Comme $B$ peut aussi s'écrire comme une colimite le long de $\mathbb{N}$,
du système $(n \mapsto \otimes_{0\leq i\leq n}A_i)$, on voit que $B$ est encore une 
$A$-algèbre fidèlement plate. On note $Z=\Spec(B)$, qui est aussi le produit fibré
de la famille des $Y_i \rightarrow X$ dans $Aff$, ainsi que
 $p_i : Z \rightarrow Y_i$ et $p : Z \rightarrow X$ les projections. 
Alors, on voit que l'élément $p^*(x_i)_{i\geq 0} \in \prod_i a(F_i(Z))$ est l'image de l'élément 
$(p_i^*(y_i))_{i\geq 0} \in \prod_{i}F_i(Z)$ par le morphisme naturel $\prod_iF_i \rightarrow \prod_ia(F_i)$. 
Ceci montre que le morphisme $\alpha$ est un épimorphisme. 

On procède de manière similaire pour voir que $\alpha$ est un monomorphisme de faisceaux. 
\end{proof}

La proposition \ref{pa} implique en particulier que les produits dénombrables
sont exacts dans la catégorie des faisceaux abéliens. De cela découle le corollaire 
suivant (voir aussi \cite{basch} pour la notion de topos \emph{replete}
dont le topos \fpqc est un exemple).

\begin{cor}\label{ca}
Soit 
$$E_* : \xymatrix{\dots \ar[r] & E_n \ar[r] & E_{n-1} \ar[r] & \dots \ar[r] & E_0}$$
une tour dénombrable
de préfaisceaux abéliens sur le site \fpqc des schémas affines. Alors, 
les morphismes naturels
$$a(\lim_n E_n) \longrightarrow \lim_n a(E_n) \qquad
a({\lim_n}^1 E_n) \longrightarrow {\lim_n}^1 a(E_n)$$
sont des isomorphismes de faisceaux abéliens $($où les membres
de droites sont calculés dans la catégorie des faisceaux
\fpqc abéliens et ceux de gauches dans la catégorie des préfaisceaux abéliens$)$. De plus
$\lim_n^ia(E_n)=0$ pour tout $i>1$.
\end{cor}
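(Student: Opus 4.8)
Le plan est de tout ramener au complexe classique à deux termes qui calcule $\lim$ et $\lim^1$ d'une tour dans une catégorie abélienne munie de produits dénombrables, puis de transporter ce complexe à travers le foncteur de faisceautisation $a$, lequel est exact et commute aux produits dénombrables d'après la proposition \ref{pa}.

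Je commencerais par rappeler que, pour toute tour $E_*$ de préfaisceaux abéliens, on dispose de la suite exacte
$$\xymatrix{0 \ar[r] & \lim_n E_n \ar[r] & \prod_n E_n \ar[r]^-{\partial} & \prod_n E_n \ar[r] & \lim_n^1 E_n \ar[r] & 0,}$$
où $\partial=\mathrm{id}-s$ est la différence entre l'identité et le décalage composé aux morphismes de transition. Comme produits et limites de préfaisceaux se calculent terme à terme, cette suite est exacte dans $\widehat{Aff}$, le noyau de $\partial$ est $\lim_n E_n$ et son conoyau est $\lim^1_n E_n$; les produits dénombrables y étant exacts, ce même complexe calcule $R\lim$, d'où $\lim^i_n E_n=0$ pour $i>1$.

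J'appliquerais ensuite le foncteur $a$. Celui-ci étant exact, il préserve noyaux et conoyaux et transforme la suite ci-dessus en une suite exacte de faisceaux; on obtient ainsi $a(\lim_n E_n)=\ker(a(\partial))$ et $a(\lim^1_n E_n)=\mathrm{coker}(a(\partial))$. La proposition \ref{pa} fournit un isomorphisme canonique $a(\prod_n E_n)\simeq \prod_n a(E_n)$ sous lequel $a(\partial)$ s'identifie à la différentielle $\mathrm{id}-s$ de la tour faisceautisée $a(E_*)$. Enfin, puisque les produits dénombrables sont exacts dans la catégorie des faisceaux \fpqc abéliens (conséquence de la proposition \ref{pa}), ce complexe à deux termes calcule $R\lim$ de la tour $a(E_*)$ dans les faisceaux : son noyau est $\lim_n a(E_n)$, son conoyau le foncteur dérivé $\lim^1_n a(E_n)$, et les $\lim^i_n a(E_n)$ s'annulent pour $i>1$. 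Comme les deux morphismes $\partial$ coïncident, ces identifications sont exactement les morphismes de comparaison canoniques de l'énoncé, ce qui conclut.

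L'étape délicate, et seul point non formel, est l'exactitude des produits dénombrables dans le topos \fpqc : c'est précisément ce que garantit la proposition \ref{pa} (il s'agit de la propriété de répletion du topos \fpqc, voir \cite{basch}). Sans elle, le conoyau de $\mathrm{id}-s$ sur $\prod_n a(E_n)$ ne s'identifierait pas au véritable foncteur dérivé $\lim^1$ et l'annulation des $\lim^i$ supérieurs pour $i>1$ ferait défaut; tout le reste de l'argument n'est que formalisme d'exactitude du foncteur $a$ et de sa commutation aux produits dénombrables.
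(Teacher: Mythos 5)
Votre proposition est correcte et suit essentiellement la même démarche que l'article : celui-ci déduit le corollaire de la proposition \ref{pa} (exactitude de $a$ et commutation aux produits dénombrables, d'où l'exactitude des produits dénombrables dans les faisceaux abéliens), et votre rédaction ne fait qu'expliciter cette déduction au moyen du complexe à deux termes $\prod_n E_n \xrightarrow{\mathrm{id}-s} \prod_n E_n$ qui calcule $R\lim$ de part et d'autre de $a$. Le point que vous identifiez comme non formel (la réplétion du topos \fpqc) est bien celui que l'article invoque, avec la même référence à \cite{basch}.
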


Les résultats précédents possèdent une version homotopique pour les champs.
On considère l'adjonction d'$\s$-catégories
$$a : Fun(Aff^{\op},Top_{\VV}) \rightleftarrows St_\ZZ : j,$$
où $j$ est l'inclusion et $a$ le foncteur champ associé, que l'on sait
être exact. Nous travaillons ici
avec des champs hypercomplets (voir \cite{HT}), c'est-à-dire que les équivalences
faibles se détectent sur les faisceaux d'homotopie. Ainsi, la proposition
\ref{pa} implique-t-elle que l'$\s$-foncteur $a$ commute aux
produits dénombrables, et donc à tout type de limites homotopiques
dénombrables. Un conséquence importante est le fait que l'$\s$-foncteur
$$Top_{\VV} \longrightarrow St_\ZZ$$
qui à un espace associe le champ constant correspondant, commute avec 
les limites homotopiques dénombrables. 

Le cas qui nous intéressera particulièrement est le corollaire suivant, 
qui est une nouvelle interprétation des résultats de  \cite{to}.

\begin{cor}\label{ca2}
Soit $X$ un espace simplement connexe et de type fini. Soit $X \longrightarrow X_p$
son complété p-adique et notons encore $X_p$ le champ
constant associé. Alors, pour tout corps de caractéristique $p>0$, 
il existe une équivalence canonique de champs \fpqc sur $\Spec(k)$
$$X_p \simeq (X \otimes k)^{\uni}.$$
\end{cor}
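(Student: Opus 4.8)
Le plan est de déduire l'énoncé pour un espace général $X$ à partir du cas tronqué, en exploitant le fait, établi dans cet appendice, que l'$\s$-foncteur champ constant $c : Top_\VV \to St_\ZZ$ commute aux limites homotopiques dénombrables (et aussi aux limites finies, puisque $a$ est exact, ainsi qu'à toutes les colimites puisque $c$ est un adjoint à gauche). Je commencerais par écrire $X \simeq \holim_n X_n$ le long de sa tour de Postnikov, qui est une limite homotopique dénombrable. D'un côté $C^*(X,k) \simeq \colim_n C^*(X_n,k)$, d'où $(X\otimes k)^{\uni} \simeq \holim_n (X_n\otimes k)^{\uni}$; de l'autre $X_p \simeq \holim_n (X_n)_p$, et $c$ commute à cette limite par ce qui précède. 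Il suffit donc de construire, de manière compatible avec la tour, une équivalence $(X_n \otimes k)^{\uni} \simeq c((X_n)_p)$ pour $X_n$ tronqué. On notera au passage qu'aucune hypothèse d'algébrique clôture sur $k$ n'est requise, contrairement à l'énoncé ponctuel tiré de \cite{to}.

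Pour un espace tronqué je procèderais par récurrence sur les étages de Postnikov. Chaque étage s'insère dans un carré homotopiquement cartésien reliant $X_k$, $X_{k-1}$ et $K(\pi_k, k+1)$, avec $\pi_k$ de type fini. Le théorème d'Eilenberg--Moore \ref{ta1} transforme ce carré en un carré homotopiquement cocartésien d'algèbres cosimpliciales, donc en un carré homotopiquement cartésien de champs affines après application de $\mathbb{R}\Spec$. Comme $c$ préserve les carrés cartésiens (limites finies), la récurrence ramène l'énoncé aux seuls espaces d'Eilenberg--MacLane $K(\pi, m)$ avec $\pi$ abélien de type fini.

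Pour ces derniers, l'analogue sur $k$ du lemme \ref{l3} donne $(K(\pi,m)\otimes k)^{\uni} \simeq K(\pi\otimes_\ZZ \HH_k, m)$. Or sur un corps $k$ de caractéristique $p$, le corollaire \ref{cp1} (restreint le long de $\FF_p \to k$) identifie $\HH_k$ au faisceau constant $\hat{\ZZ}_p$; comme $\pi$ est de type fini, $\pi\otimes_\ZZ \HH_k$ est alors le faisceau constant $\underline{\pi^{\wedge}_p}$. Il reste donc le point clé: identifier le champ d'Eilenberg--MacLane $K(\underline{\pi^{\wedge}_p}, m)$, construit sur un faisceau constant, avec le champ constant $c(K(\pi^{\wedge}_p, m)) = c((K(\pi,m))_p)$. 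J'écrirais $\pi^{\wedge}_p = \lim_j \pi/p^j$, de sorte que $K(\underline{\pi^{\wedge}_p},m) \simeq \holim_j K(\underline{\pi/p^j}, m)$ et $c(K(\pi^{\wedge}_p,m)) \simeq \holim_j c(K(\pi/p^j,m))$ par commutation de $c$ aux limites dénombrables (le $\lim^1$ s'annulant car la tour est surjective). On est ramené au cas d'un groupe fini $A$, pour lequel $c(K(A,m)) \simeq K(\underline A, m)$: en effet $K(A,m)$ est la réalisation d'une construction bar en produits finis de $A$, que $c$ préserve (colimites et produits finis), ce qui reconstitue la construction bar interne définissant $K(\underline A, m)$.

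Le principal obstacle est cette dernière identification entre champ constant et champ d'Eilenberg--MacLane d'un faisceau constant: elle repose de façon essentielle sur les propriétés de commutation du foncteur de faisceautisation \fpqc établies via la proposition \ref{pa} et le corollaire \ref{ca} (commutation aux $\lim$ et $\lim^1$ dénombrables), ainsi que sur le contrôle du $\lim^1$ pour la tour profinie de la complétion $p$-adique. Il faudra enfin veiller à la naturalité de toutes ces identifications pour que l'équivalence assemblée le long de la tour de Postnikov soit canonique; en particulier le champ constant et l'affinisation verront automatiquement la même cohomologie \fpqc (galoisienne) de $k$, dès lors que tout est bâti à partir de faisceaux constants.
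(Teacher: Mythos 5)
Votre démonstration est correcte dans ses grandes lignes, mais elle suit une route réellement différente de celle du texte. La preuve du papier est bien plus courte : après réduction au cas $k=\FF_p$ par changement de base, elle part du fait que $X\to X_p$ induit une équivalence $C^*(X_p,\FF_p)\simeq C^*(X,\FF_p)$, ce qui fournit \emph{par adjonction} un morphisme canonique $X_p \longrightarrow \mathbb{R}\Spec(C^*(X_p,\FF_p))\simeq (X\otimes \FF_p)^{\uni}$ du champ constant vers l'affinisation ; elle invoque ensuite le calcul, issu de \cite{to}, des faisceaux d'homotopie de $(X\otimes \FF_p)^{\uni}$ comme limites de faisceaux $\lim_n(\pi_i(X)\otimes \ZZ/p^n)$, et conclut par la proposition \ref{pa} que cette limite de faisceaux constants est le faisceau constant de fibre $\pi_i(X)^{\wedge}_p$. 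Vous, au contraire, re-démontrez ce calcul sur $k$ de zéro : dévissage de Postnikov (en utilisant la commutation de l'affinisation et du foncteur champ constant aux limites homotopiques dénombrables), théorème d'Eilenberg--Moore \ref{ta1} (dont il faut noter qu'il est énoncé à coefficients $\ZZ$ et doit être changé de base à $k$), lemme \ref{l3} changé de base, corollaire \ref{cp1}, puis identification du champ $K(\underline{A},m)$ d'un faisceau constant fini avec le champ constant associé à $K(A,m)$ via les constructions bar et le corollaire \ref{ca}. Ce chemin est légitime et non circulaire --- le corollaire \ref{ca2} n'intervient nulle part dans les preuves du lemme \ref{l3} ou de la proposition \ref{p2} --- et il a l'avantage de ne dépendre de \cite{to} qu'à travers le corollaire \ref{c1}, alors que la preuve du papier repose sur l'énoncé faisceautique précis de \cite{to} en caractéristique $p$. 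Deux points restent toutefois à consolider chez vous : d'une part, l'assemblage étage par étage utilise implicitement que la complétion $p$-adique préserve les carrés de Postnikov (fibrations principales) des espaces simplement connexes de type fini, ce qui est vrai mais doit être dit ; d'autre part, et surtout, pour obtenir une équivalence \emph{canonique} comme l'exige l'énoncé, il serait préférable de procéder comme le papier : construire d'abord le morphisme de comparaison $X_p \to (X\otimes k)^{\uni}$ par adjonction (il existe puisque $C^*(X_p,k)\simeq C^*(X,k)$, cf. proposition \ref{prapp}), puis utiliser votre calcul pour vérifier que ce morphisme est une équivalence ; cela court-circuite toutes les questions de naturalité que vous signalez vous-même à la fin.
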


\begin{proof}
Comme $(X \otimes k)^{\uni}$
est obtenu par changement de base de $(X \otimes \FF_p)^{\uni}$
il suffit de traiter le cas $k=\FF_p$.
On rappelle que $X_p$ est caractérisé
par le fait que le morphisme induit en homotopie 
$\pi_i(X) \longrightarrow \pi_i(X_p)$ fait de $\pi_i(X_p)$ le
complété $p$-adique de $\pi_i(X)$. De plus, le morphisme 
$X \longrightarrow X_p$ induit une équivalence sur les $k$-algèbres
cosimpliciales de cohomologie
$$C^*(X_p,\FF_p) \simeq C^*(X,\FF_p).$$
Ceci nous donne, par adjonction, le morphisme canonique
$$X_p \longrightarrow \mathbb{R}\Spec(C^*(X_p,\FF_p)) \simeq
(X\otimes \FF_p)^{\uni}.$$
Sur les groupes d'homotopie, ce morphisme induit le morphisme canonique
$$\pi_i(X_p) \longrightarrow \lim_n (\pi_i(X)\otimes \ZZ/p^n),$$
où la limite de droite est prise dans la catégorie
des faisceaux \fpqc et le membre de gauche est le faisceau
constant de fibre le complété $p$-adique de $\pi_i(X)$. Le fait que
ce morphisme soit un isomorphisme de faisceaux se déduit de la proposition \ref{pa}.
\end{proof}

\end{appendix}


\ifx\undefined\bysame
\newcommand{\bysame}{\leavevmode\hbox to3em{\hrulefill}\,}
\fi


\begin{thebibliography}{MRT19}

\bibitem[BS15]{basch}
Bhargav Bhatt and Peter Scholze, {\em The pro-\'{e}tale topology for schemes},
  Ast\'{e}risque \textbf{369} (2015), 99--201.

\bibitem[Car62]{car}
P.~Cartier, {\em Groupes alg\'{e}briques et groupes formels}, Colloq.
  {T}h\'{e}orie des {G}roupes {A}lg\'{e}briques ({B}ruxelles, 1962), Librairie
  Universitaire, Louvain; GauthierVillars, Paris, 1962, pp.~87--111.

\bibitem[DG70]{dg}
Michel Demazure and Pierre Gabriel, {\em Groupes alg\'{e}briques. {T}ome {I}:
  {G}\'{e}om\'{e}trie alg\'{e}brique, g\'{e}n\'{e}ralit\'{e}s, groupes
  commutatifs}, Masson \& Cie, \'{E}diteur, Paris; North-Holland Publishing
  Co., Amsterdam, 1970. Avec un appendice {\it Corps de classes local} par
  Michiel Hazewinkel.

\bibitem[Eke02]{ek}
Torsten Ekedahl, {\em On minimal models in integral homotopy theory}, the Roos Festschrift volume, 1.  Homology Homotopy Appl. \textbf{4} (2002), no.~2, part~1, 191--218.

\bibitem[Ell06]{ell}
Jesse Elliott, {\em Binomial rings, integer-valued polynomials, and
  {$\lambda$}-rings}, J. Pure Appl. Algebra {\bf 207} (2006), no.~1, 165--185.

\bibitem[Gro]{poursuite}
Alexander Grothendieck, {\em Poursuing stacks}, Archive Grothendieck cote 134-7:
  \url{https://grothendieck.umontpellier.fr/archives-grothendieck/}.

\bibitem[Gro66]{ega4-3}
A.~Grothendieck, {\em \'{E}l\'{e}ments de g\'{e}om\'{e}trie alg\'{e}brique.
  {IV}. \'{E}tude locale des sch\'{e}mas et des morphismes de sch\'{e}mas.
  {III}}, Publ. Math. Inst. Hautes Études Sci. \textbf{28} (1966), 255.

\bibitem[Haz09]{haz}
Michiel Hazewinkel, {\em Witt vectors. {I}}, Handbook of algebra. {V}ol. 6,
  Handb. Algebr., vol.~6, Elsevier/North-Holland, Amsterdam, 2009,
  pp.~319--472.

\bibitem[Hes]{hes}
Lars Hesselholt, {\em Lecture notes on Witt vectors}, \url{https://www.math.nagoya-u.ac.jp/~larsh/papers/s03/wittsurvey.pdf}.

\bibitem[Lur]{HA}
Jacob Lurie, {\em Higher Algebra}, \url{ http://www.math.harvard.edu/~lurie/papers/HA.pdf}.

\bibitem[Lur09]{HT}
Jacob Lurie, {\em Higher topos theory}, Annals of Mathematics Studies, vol.
  170, Princeton University Press, Princeton, NJ, 2009.

\bibitem[Man01]{ma1}
Michael~A. Mandell, {\em {$E_\infty$} algebras and {$p$}-adic homotopy theory},
  Topology {\bf 40} (2001), no.~1, 43--94.

\bibitem[Man06]{ma2}
Michael~A. Mandell, {\em Cochains and homotopy type}, Publ. Math. Inst. Hautes
  \'{E}tudes Sci. \textbf{103} (2006), 213--246.

\bibitem[MRT19]{mrt}
Tasos Moulinos, Marco Robalo, and Bertrand Toën, {\em A universal \textrm{HKR}
  theorem}, pr\'epublication \href{https://arxiv.org/abs/1906.00118}{arXiv:1906.00118} (2019).

\bibitem[Smi67]{smi}
Larry Smith, {\em Homological algebra and the {E}ilenberg-{M}oore spectral
  sequence}, Trans. Amer. Math. Soc. {\bf 129} (1967), 58--93.

\bibitem[To{\"{e}}06]{to}
Bertrand To{\"{e}}n, {\em Champs affines}, Selecta Math. (N.S.) {\bf 12}
  (2006), no.~1, 39--135.

\end{thebibliography}
\end{document}